\newtheorem{theorem}{Theorem}[section]
\newtheorem{corollary}[theorem]{Corollary}
\newtheorem{definition}[theorem]{Definition}
\newtheorem{lemma}[theorem]{Lemma}
\newtheorem{proposition}[theorem]{Proposition}
\newtheorem{remark}[theorem]{Remark}
\begin{document}
\title[Scaffolds and integral Hopf Galois module structure]{Scaffolds and integral Hopf Galois module structure on purely inseparable extensions}
\author{Alan Koch}

\begin{abstract}
Let $p$ be prime. Let $L/K$ be a finite, totally ramified, purely inseparable
extension of local fields, $\left[  L:K\right]  =p^{n},\;n\geq2.$ It is known
that $L/K$ is Hopf Galois for numerous Hopf algebras $H,$ each of which can
act on the extension in numerous ways. For a certain collection of such $H$ we
construct ``Hopf Galois scaffolds'' which allow us to obtain a Hopf analogue
to the Normal Basis Theorem for $L/K.$ The existence of a scaffold structure
depends on the chosen action of $H$ on $L.$ We apply the theory of scaffolds
to describe when the fractional ideals of $L$ are free over their associated
orders in $H.$

\end{abstract}
\subjclass{[2010]Primary 16T05. Secondary 11R33, 11S15, 12F15}
\keywords{Hopf Galois extensions, Integral Galois module theory, scaffolds}
\maketitle

\section{Introduction}

Let $L/K$ be a totally ramified extension of local fields of degree $p^{n}$,
where the residue field of $K$ has characteristic $p.$ Suppose further that
$L/K$ is Galois with $G=\operatorname*{Gal}\left(  L/K\right)  .$ Let
$\mathfrak{O}_{K}$ and $\mathfrak{O}_{L}$ denote the valuation rings of $K$
and $L$ respectively. There are two natural ways to describe the elements of
$L$, namely by using its valuation $v_{L}\;$or by using its Galois action. If
$\pi\in L$ is a uniformizing parameter, then every element of $L$ is a
$K$-linear combination of powers of $\pi$; computing its valuation is a simple
process. A drawback of the valuation representation of $L$ is that the Galois
action is not necessarily transparent.

Alternatively, we have the Normal Basis Theorem, which asserts that there
exists a $\rho\in L$ whose Galois conjugates form a $K$-basis for $L/K;$
equivalently, $L$ is a free rank one module over the group algebra $KG.$ Here,
every element of $L$ is a $K$-linear combination of $\left\{  \sigma\left(
\rho\right)  :\sigma\in G\right\}  ,$ which allows for a simple description of
the Galois action; however, the valuation representation is not transparent,
making certain Galois module theory questions difficult to answer. For
example, $\mathfrak{O}_{L}$ is an $\mathfrak{O}_{K}G$-module, however by
Noether's Theorem \cite{Noether31} $\mathcal{\mathfrak{O}}_{L}$ is not free of
rank one if $L/K$ is wildly ramified. The $\mathfrak{O}_{K}G$-module structure
of $\mathfrak{O}_{L}$ when $\mathfrak{O}_{L}$ does not possess a normal
integral basis can be more difficult. A typical strategy, thanks to Leopoldt
\cite{Leopoldt59} is to replace $\mathfrak{O}_{K}G$ with a larger
$\mathfrak{O}_{K}$-subalgebra of $KG,$ namely
\[
\mathfrak{A}=\left\{  \alpha\in KG:\alpha\left(  \mathfrak{O}_{L}\right)
\subset\mathfrak{O}_{L}\right\}  ,
\]
which also acts on $\mathfrak{O}_{L};$ the structure of $\mathfrak{O}_{L}$ as
an $\mathfrak{A}$-module can be simpler to describe.

In an attempt to unite these representations, G. Griffith Elder \cite{Elder09}
first developed a theory of ``Galois scaffolds''. In that work a Galois
scaffold consists of a subset $\left\{  \theta_{1},\theta_{2},\dots,\theta
_{n}\right\}  $ of $KG$, together with a positive integer $v,$ called an
integer certificate, such that $\left\{  v_{L}\left(  \theta_{i}^{j}\left(
\rho\right)  \right)  :1\leq i\leq n,\;0\leq j\leq p-1\right\}  $ is a
complete set of residues $\operatorname{mod}p^{n}$ where $\rho\in L$ is any
element of valuation $v$. Certainly, $\left\{  \theta_{i}^{j}\left(
\rho\right)  \right\}  $ forms a $K$-basis for $L,$ and this basis facilitates
the study of both valuation and Galois action, particularly if $\theta_{i}%
^{p}=0$ for all $i$. A simple example of a Galois scaffold arises when $n=1$
and the break number $b$ is relatively prime to $p;$ in this case, if
$G=\left\langle \sigma\right\rangle $ then $\theta_{1}=\sigma-1,\;v=b$ is an
example of a Galois scaffold. Such scaffolds do not always exist -- in fact,
integer certificates may not exist, for example if $L/K$ is unramified and
$\pi^{p}=1$ \cite{Byott11}. This notion of scaffold was refined in
\cite{ByottElder13}, and then again in \cite{ByottChildsElder14},\ the latter
version being the most useful for describing the integral Galois module structure.

The version in \cite{ByottChildsElder14} is also the most general as it does
not insist that $L/K$ be Galois, merely that there is a $K$-algebra $A$ which
acts on $L$ in a very reasonable way. A classic example of such an algebra is
a $K$-Hopf algebra. There are many more Hopf Galois extensions than Galois
extensions. For example, any Galois extension is Hopf Galois for at least one
Hopf algebra (namely, $H=KG$) and, if $n\geq2,$ many more: the exact
determination of the number of such $H$ is a group theory problem thanks to
\cite{GreitherPareigis87}, which covers all separable extensions. At the other
extreme, if the extension $L/K$ is purely inseparable, then it is also Hopf
Galois \cite{Chase76}; if $\left[  L:K\right]  \geq p^{2},$ then there are
numerous Hopf algebras which make $L/K$ Hopf Galois \cite{Koch14a}.

In the setting where $L/K$ is Hopf Galois with Hopf algebra $H$, one can study
the structure of $\mathfrak{O}_{L}$ as an $H$-module. Given
\cite{ByottChildsElder14}, a natural approach would be an attempt to construct
an $H$-scaffold which, loosely, consists of $\left\{  \lambda_{t}%
:t\in\mathbb{Z}\right\}  \subset L$ with $v_{L}\left(  \lambda_{t}\right)
=t,$ along with $\left\{  \Psi_{i}:0\leq i\leq n-1\right\}  \subset H$ such
that $\Psi_{i}$ acts on $\lambda_{t}$ in a manner which makes $v_{L}\left(
\Psi_{i}\left(  \lambda_{t}\right)  \right)  $ easy to compute.

Here, we focus on the case where $L/K$ is a totally ramified, purely
inseparable extension of local fields, $\left[  L:K\right]  =p^{n},\;n\geq2.$
We take a collection of Hopf algebras $H$ which make $L/K$ Hopf Galois and
describe the generalized integral Hopf Galois module structure of
$\mathfrak{O}_{L}.$ The integral Hopf Galois module structure we seek is a
description of all of the fractional ideals of $L$ as $H$-modules. In detail,
each fractional ideal of $L$ is of the form $\mathfrak{P}_{L}^{h}$ for
$h\in\mathbb{Z},$ where $\mathfrak{P}_{L}$ is the maximal ideal of
$\mathfrak{O}_{L}.$ In other words, $\mathfrak{P}_{L}^{h}=\left\{  x\in
L:v_{L}\left(  x\right)  \geq h\right\}  .$ For each $h$ we let $\mathfrak{A}%
_{h}$ be the largest subset of $H$ which acts on $\mathfrak{P}_{L}^{h},$ i.e.,%
\[
\mathfrak{A}_{h}=\left\{  \alpha\in H:\alpha\mathfrak{P}_{L}^{h}%
\subset\mathfrak{P}_{L}^{h}\right\}  .
\]
We call $\mathfrak{A}_{h}$ the associated order of $\mathfrak{P}_{L}^{h}$ in
$H$: it is clearly an $\mathfrak{O}_{K}$-subalgebra of $H$ and $\mathfrak{A}%
_{h}\otimes_{\mathfrak{O}_{K}}K\cong H.$ By construction, $\mathfrak{A}_{h}$
acts on $\mathfrak{P}_{L}^{h}$; the existence of the scaffold allows for a
numerical criterion for determining whether $\mathfrak{P}_{L}^{h}$ is a free
$\mathfrak{A}_{h}$-module. The criterion itself is independent of the
scaffold, provided the scaffold exists.

The paper is organized as follows. After giving a definition of an
$H$-scaffold, a simpler version than the one in \cite{ByottChildsElder14}, we
consider the family of monogenic $K$-Hopf algebras $H_{n,r,f},\;1\leq r\leq
n-1,\;f\in K^{\times}$ introduced in \cite{Koch14a} which make $L$ an
$H_{n,r,f}$-Galois object. We examine the case where $2r\geq n$ and consider
actions of the linear dual $H:=H_{n,r,f}^{\ast}$ which give $L/K$ the
structure of a Hopf Galois extension. A subtlety that arises is that $H$
possesses an infinite number of actions on $L$; in each case, $L/K$ is
$H$-Galois. The different actions correspond with different choices for
$K$-algebra generator $x\in L$; and for each choice of $x$ we will construct
$H$-scaffolds for infinitely many actions. As with the Galois case, the
$H$-scaffold will allow us to consider the effect of the action on the
valuation of some specially chosen elements, and using \cite[Th 3.1,
3.7]{ByottChildsElder14} we will use it to describe the integral Hopf Galois
module structure. We will then focus on a specific action for which an
$H$-scaffold exists, and explicitly describe which fractional ideals
$\mathfrak{P}_{L}^{h}$ are free over their associated orders. We conclude with
some remarks concerning selecting the ``best'' choices of $r$ and $f,$ and the
action on $L,$ for answering integral Hopf Galois module theory questions.

The evident purpose of this work is to construct $H$-scaffolds. However, our
results contribute to the bigger picture of scaffolds. The definition of a
scaffold has evolved significantly since Elder's 2009 paper, which required
$L/K$ to be a Galois extension. At this point, it is not yet clear how
prevalent scaffolds are for general Hopf Galois extensions. But we will see
that in the finite purely inseparable case, many scaffolds exist.

Throughout, we fix an integer $n\geq2$ and $L$ a totally ramified purely
inseparable extension of $K=\mathbb{F}_{q}\left(  \left(  T\right)  \right)  $
of degree $p^{n}.$ Let $v_{K}$ be the $T$-adic valuation, $v_{L}$ the
extension of $v_{K}$ to $L$. Write $L=K\left(  x\right)  ,\;x^{p^{n}}=\beta\in
K,\;v_{K}\left(  \beta\right)  =-b<0,\;p\nmid b.$ We let $H$ and $H_{n,r,f}$
be as above, and we assume $2r\geq n.$

The author would like to thank G.\ Griffith Elder for his input in the
preparation of this paper, and the University of Nebraska at Omaha for their
generous hospitality during the development of some of these results.

\section{Scaffolds}

The definition of an $A$-scaffold in \cite{ByottChildsElder14} is very general
-- more so than we need here. We will simplify this definition as much as
possible, and since our acting $K$-algebra is a Hopf algebra we will refer to
it as an $H$-scaffold.

\begin{definition}
\label{bcedef}Let $a$ be an integer such that $ab\equiv-1\,\operatorname{mod}%
p^{n}.$ Let $\mathfrak{T}>1$ be an integer. An $H$-scaffold on $L$ of
tolerance $\mathfrak{T}$ consists of:

\begin{enumerate}
\item A set $\left\{  \lambda_{j}:j\in\mathbb{Z},\;v_{L}\left(  \lambda
_{j}\right)  =j\right\}  $ of elements of $L$ such that $\lambda_{j_{1}%
}\lambda_{j_{2}}^{-1}\in K$ when $j_{1}\equiv j_{2}\,\operatorname{mod}p^{n}.$

\item A collection $\left\{  \Psi_{s}:0\leq s\leq n-1\right\}  $ of elements
in $H$ such that $\Psi_{s}\left(  1_{K}\right)  =0$ for all $s$ and, mod
$\lambda_{j+p^{s}b}\mathfrak{P}_{L}^{\mathfrak{T}},$%
\[
\Psi_{s}\left(  \lambda_{j}\right)  \equiv\left\{
\begin{array}
[c]{cc}%
u_{s,j}\lambda_{j+p^{s}b} & \operatorname*{res}\left(  aj\right)  _{s}>0\\
0 & \text{otherwise}%
\end{array}
\right.
\]
where $u_{s,j}\in\mathfrak{O}_{K}^{\times},$ $\operatorname*{res}\left(
aj\right)  $ is the least nonnegative residue of $aj$ mod $p^{n},$ and
\[
\operatorname*{res}\left(  aj\right)  =\sum_{s=0}^{n-1}\operatorname*{res}%
\left(  aj\right)  _{s}\,p^{s},\;0\leq\operatorname*{res}\left(  aj\right)
_{s}\leq p-1
\]
is the $p$-adic expansion of $\operatorname*{res}\left(  aj\right)  .$
\end{enumerate}
\end{definition}

Given an $H$-scaffold we know the effect of applying $\Psi_{s}$ to
$\lambda_{j},$ provided $\operatorname*{res}\left(  aj\right)  _{s}>0.$ For
$0<i\leq p-1$ it can be readily seen that $\operatorname*{res}\left(  a\left(
b+p^{s}bi\right)  \right)  _{s}=p-i>0,$ hence $\Psi_{s}^{i}\left(  \lambda
_{b}\right)  \equiv u\lambda_{b+p^{s}bi}\,\operatorname{mod}\lambda_{j+p^{s}%
b}\mathfrak{P}_{L}^{\mathfrak{T}}$ for some $u\in\mathfrak{O}_{K}^{\times}.$
More generally,
\[
v_{L}\left(  \Psi_{0}^{i_{0}}\Psi_{1}^{i_{1}}\cdots\Psi_{n-1}^{i_{n-1}}\left(
\lambda_{b}\right)  \right)  =b+b\sum_{s=0}^{n-1}i_{s}p^{s},\;0\leq i_{s}\leq
p-1.
\]
By allowing the $\left\{  i_{s}\right\}  $ to vary, we obtain $p^{n}$ elements
of $L,$ pairwise incongruent modulo $p^{n},$ hence $\left\{  \Psi_{0}^{i_{0}%
}\Psi_{1}^{i_{1}}\cdots\Psi_{n-1}^{i_{n-1}}\left(  \lambda_{b}\right)  :0\leq
i_{s}\leq p-1\right\}  \;$is a $K$-basis for $L.$

We will use the result below to construct our $H$-scaffolds.

\begin{lemma}
\label{Hscaf}Suppose we have $\left\{  \Psi_{s}:0\leq s\leq n-1\right\}
\subset H$ such that, for\ $i\leq p^{n}-1,$ $i=\sum_{s=0}^{n-1}i_{s}p^{s},$
\[
\Psi_{s}\left(  x^{i}\right)  \equiv i_{s}x^{i-p^{s}}\,\operatorname{mod}%
x^{i-p^{s}}\mathfrak{P}_{L}^{\mathfrak{T}}%
\]
for some $\mathfrak{T}>1.$ Let%
\[
\lambda_{j}=T^{\left(  j+b\operatorname*{res}\left(  aj\right)  \right)
/p^{n}}x^{\operatorname*{res}\left(  aj\right)  }.
\]
Then $\left\{  \lambda_{j}\right\}  ,\left\{  \Psi_{s}\right\}  $ form a
scaffold of tolerance $\mathfrak{T}.$
\end{lemma}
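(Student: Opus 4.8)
The plan is to verify the two conditions of Definition \ref{bcedef} directly for the proposed $\lambda_j$ and $\Psi_s$. First I would check that the elements $\lambda_j = T^{(j+b\operatorname*{res}(aj))/p^n}x^{\operatorname*{res}(aj)}$ are well defined and have the right valuation. Since $ab\equiv -1\pmod{p^n}$, we have $aj \equiv -j/b \cdot b \pmod{}$... more carefully: $b\operatorname*{res}(aj)\equiv baj \equiv -j\pmod{p^n}$, so $j + b\operatorname*{res}(aj)\equiv 0\pmod{p^n}$ and the exponent of $T$ is an integer. For the valuation, recall $v_L(T)=p^n$ (since $v_K(T)=1$ and $e(L/K)=p^n$) and $v_L(x) = v_L(\beta)/p^n = -b$; hence $v_L(\lambda_j) = (j+b\operatorname*{res}(aj)) - b\operatorname*{res}(aj) = j$, as required. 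The condition $\lambda_{j_1}\lambda_{j_2}^{-1}\in K$ when $j_1\equiv j_2\pmod{p^n}$ follows because then $\operatorname*{res}(aj_1)=\operatorname*{res}(aj_2)$, so the two differ only by a power of $T$.

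Next I would handle condition (2). The equality $\Psi_s(1_K)=0$ is the $i=0$ case of the hypothesis on $\Psi_s$ (taking $i_s=0$). For the main congruence, fix $j$ and set $i=\operatorname*{res}(aj)$, with $p$-adic digits $i_s=\operatorname*{res}(aj)_s$. Applying the hypothesis, $\Psi_s(x^i)\equiv i_s x^{i-p^s}\pmod{x^{i-p^s}\mathfrak{P}_L^{\mathfrak{T}}}$, and since $\Psi_s$ is $K$-linear and $\lambda_j$ is $x^i$ times a power of $T\in K$, we get $\Psi_s(\lambda_j)\equiv i_s\, T^{(j+bi)/p^n}x^{i-p^s}\pmod{T^{(j+bi)/p^n}x^{i-p^s}\mathfrak{P}_L^{\mathfrak{T}}}$. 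When $i_s=0$ this is $\equiv 0$, matching the "otherwise" case (and indeed $i_s=\operatorname*{res}(aj)_s=0$ is exactly "not $>0$"). When $i_s>0$, I need to identify $T^{(j+bi)/p^n}x^{i-p^s}$ with $u_{s,j}\lambda_{j+p^sb}$ for a unit $u_{s,j}$, and to check the error term lies in $\lambda_{j+p^sb}\mathfrak{P}_L^{\mathfrak{T}}$.

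The crux is therefore the identification in the digit-decreasing case: I must show $\operatorname*{res}(a(j+p^sb)) = \operatorname*{res}(aj) - p^s$ whenever $\operatorname*{res}(aj)_s>0$. Indeed $a(j+p^sb) \equiv aj + p^s(ab) \equiv aj - p^s\pmod{p^n}$, and since $\operatorname*{res}(aj)\geq p^s$ (as its $s$-th digit is positive), subtracting $p^s$ does not cause a borrow past position $n-1$, so $\operatorname*{res}(a(j+p^sb)) = \operatorname*{res}(aj)-p^s =: i-p^s$, with $s$-th digit $i_s-1\geq 0$. Then
\[
\lambda_{j+p^sb} = T^{(j+p^sb + b(i-p^s))/p^n}x^{i-p^s} = T^{(j+bi)/p^n}x^{i-p^s},
\]
since $j+p^sb+b(i-p^s) = j+bi$. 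Thus $T^{(j+bi)/p^n}x^{i-p^s}$ equals $\lambda_{j+p^sb}$ exactly, so $u_{s,j}=i_s\in\{1,\dots,p-1\}\subset\mathfrak{O}_K^\times$ (here I use $p\nmid i_s$ since $0<i_s\leq p-1$, and that $\mathbb{F}_q$ has characteristic $p$... wait, $i_s$ is a rational integer reduced mod $p$; as an element of $\mathfrak{O}_K=\mathbb{F}_q[[T]]$ it is a nonzero element of the prime field, hence a unit). Finally the error term: the hypothesis says the error is in $x^{i-p^s}\mathfrak{P}_L^{\mathfrak{T}}$, hence after multiplying by the power of $T$ it lies in $T^{(j+bi)/p^n}x^{i-p^s}\mathfrak{P}_L^{\mathfrak{T}} = \lambda_{j+p^sb}\mathfrak{P}_L^{\mathfrak{T}}$, exactly the required modulus. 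This completes the verification of both conditions, so $\{\lambda_j\},\{\Psi_s\}$ form an $H$-scaffold of tolerance $\mathfrak{T}$. The only genuinely delicate point is the no-borrow argument for the $p$-adic digits, which hinges on $ab\equiv-1\pmod{p^n}$ together with the positivity of the $s$-th digit.
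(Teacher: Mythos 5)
Your proposal is correct and follows essentially the same route as the paper's proof: verify $v_L(\lambda_j)=j$, apply the hypothesis to $x^{\operatorname*{res}(aj)}$, and use $a(j+bp^s)\equiv aj-p^s \pmod{p^n}$ together with $\operatorname*{res}(aj)\geq p^s$ to identify $T^{(j+b\operatorname*{res}(aj))/p^n}x^{\operatorname*{res}(aj)-p^s}$ with $\lambda_{j+bp^s}$. Your write-up is in fact slightly more careful than the paper's on the integrality of the $T$-exponent and on why $\lambda_{j_1}\lambda_{j_2}^{-1}\in K$, but the substance is identical.
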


\begin{proof}
First, since $v_{L}\left(  x\right)  =-b,$%
\[
v_{L}\left(  \lambda_{j}\right)  =j+b\operatorname*{res}\left(  aj\right)
-b\operatorname*{res}\left(  aj\right)  =j,
\]
and clearly $v_{L}\left(  \lambda_{j_{1}}\lambda_{j_{2}}^{-1}\right)
=j_{1}-j_{2},$ so condition (1)\textbf{ }of the definition above is satisfied.
Next, we have%
\begin{align*}
\Psi_{s}\left(  \lambda_{j}\right)   &  =\Psi_{s}\left(  T^{\left(
j+b\operatorname*{res}\left(  aj\right)  \right)  /p^{n}}%
x^{\operatorname*{res}\left(  aj\right)  }\right) \\
&  =T^{\left(  j+b\operatorname*{res}\left(  aj\right)  \right)  /p^{n}}%
\Psi_{s}\left(  x^{\operatorname*{res}\left(  aj\right)  }\right) \\
&  \equiv T^{\left(  j+b\operatorname*{res}\left(  aj\right)  \right)  /p^{n}%
}\operatorname*{res}\left(  aj\right)  _{s}x^{\operatorname*{res}\left(
aj\right)  -p^{s}}\,\operatorname{mod}x^{\operatorname*{res}\left(  aj\right)
-p^{s}}\mathfrak{P}_{L}^{\mathfrak{T}}%
\end{align*}
If $\operatorname*{res}\left(  aj\right)  _{s}=0$ then $\Psi_{s}\left(
\lambda_{j}\right)  =0.$ Otherwise, $a\left(  j+bp^{s}\right)  \equiv
aj-p^{s}\,\operatorname{mod}p^{n}$ and%
\begin{align*}
\operatorname*{res}\left(  a\left(  j+bp^{s}\right)  \right)   &
=\operatorname*{res}\left(  aj-p^{s}\right) \\
&  =\operatorname*{res}\left(  aj\right)  -p^{s},
\end{align*}
the latter equality since $\operatorname*{res}\left(  aj\right)  \geq p^{s}.$
Thus $\operatorname*{res}\left(  aj\right)  =p^{s}+\operatorname*{res}\left(
a\left(  j+bp^{s}\right)  \right)  $ and so%
\begin{align*}
j+b\operatorname*{res}\left(  aj\right)   &  =j+b\left(  p^{s}%
+\operatorname*{res}\left(  a\left(  j+bp^{s}\right)  \right)  \right) \\
&  =j+b\operatorname*{res}\left(  a\left(  j+bp^{s}\right)  \right)  +bp^{s},
\end{align*}
giving%
\begin{align*}
\operatorname*{res}\left(  aj\right)  _{s}T^{\left(  j+b\operatorname*{res}%
\left(  aj\right)  \right)  /p^{n}}x^{\operatorname*{res}\left(  aj\right)
-p^{s}}  &  =\operatorname*{res}\left(  aj\right)  _{s}T^{\left(
j+b\operatorname*{res}\left(  a\left(  j+bp^{s}\right)  \right)
+bp^{s}\right)  /p^{n}}x^{\operatorname*{res}\left(  a\left(  j+bp^{s}\right)
\right)  }\\
&  =\operatorname*{res}\left(  aj\right)  _{s}\lambda_{j+bp^{s}}.
\end{align*}
Setting $u_{s,j}=\operatorname*{res}\left(  aj\right)  _{s}$ shows that (2) is
also satisfied.
\end{proof}

\begin{remark}
By adjusting each $\lambda_{j}$ by a scalar it is possible to have
$u_{s,j}=1.$ This is the primary difference between the construction above and
the one found in \cite[Sec. 5.3]{ByottChildsElder14}.
\end{remark}

In the work to follow, we will use the definition of $H$-scaffold given by the
description in Lemma \ref{Hscaf}. As the choice of $\left\{  \lambda
_{j}\right\}  $ will remain fixed (assuming a constant $b$), we will refer to
the scaffold as $\left\{  \Psi_{s}\right\}  .$

\section{The Hopf Algebra Structure}

In this section, we introduce the class of Hopf algebras we will use to
construct our $H$-scaffolds. To do so, we first recall a family of Hopf
algebras introduced in \cite{Koch14a}. For $0<r<n\leq2r$ and $f\in K^{\times
},$ let $H_{n,r,f}$ be the $K$-Hopf algebra whose $K$-algebra structure is
$H_{n,r,f}=K\left[  t\right]  /\left(  t^{p^{n}}\right)  $; whose counit and
antipodal map are $\varepsilon\left(  t\right)  =0$ and $\lambda\left(
t\right)  =-t$ respectively; and whose comultiplication is
\[
\Delta\left(  t\right)  =t\otimes1+1\otimes t+f\sum_{\ell=1}^{p-1}%
\frac{1}{\ell!\left(  p-\ell\right)  !}t^{p^{r}\ell}\otimes t^{p^{r}\left(
p-\ell\right)  }.
\]

Let us fix values for $r,n,$ and $f$ as above$,$ and let $H=H_{n,r,f}^{\ast}.$
Certainly, $H$ has a $K$-basis $\left\{  z_{0}=1,z_{1},\dots,z_{p^{n}%
-1}\right\}  $ with $z_{i}:H\rightarrow K$ given by
\[
z_{j}\left(  t^{i}\right)  =\delta_{i,j},
\]
where $\delta_{i,j}$ is the Kronecker delta. The algebra structure on $H$ is
induced from the coalgebra structure on $H_{n,r,f};$ explicitly,%
\begin{equation}
z_{j_{1}}z_{j_{2}}\left(  h\right)  =\operatorname{mult}\left(  z_{j_{1}%
}\otimes z_{j_{2}}\right)  \Delta\left(  h\right)  .\label{zmult}%
\end{equation}
In this section we will show that $\left\{  z_{p^{s}}:0\leq s\leq n-1\right\}
$ generate $H$ as a $K$-algebra. This set will be (part of) the scaffolds we develop.

We start by recalling a result which will facilitate the study of the algebra
structure of $H$ as well as the action of $H$ on $L.$

\begin{lemma}
\label{thepowlem}Let%
\[
S_{f}\left(  u,v\right)  =u+v+f\sum_{\ell=1}^{p-1}\frac{1}{\ell!\left(
p-\ell\right)  !}u^{p^{r}\ell}v^{p^{r}\left(  p-\ell\right)  }.
\]
Then, for every positive integer $i$, $S_{f}\left(  u,v\right)  ^{i}$ is an
$K^{\times}$-linear combination of elements of the form
\[
f^{i_{3}}u^{i_{1}+p^{r}\ell^{\prime}}v^{i_{2}+p^{r}\ell^{\prime\prime}},
\]
where%
\begin{align*}
i  &  =i_{1}+i_{2}+i_{3}\\
\ell^{\prime}  &  =i_{3,1}+2i_{3,2}+\cdots+\left(  p-1\right)  i_{3,p-1}\\
\ell^{\prime\prime}  &  =\left(  p-1\right)  i_{3,1}+\left(  p-2\right)
i_{3,2}+\cdots+i_{3,p-1},
\end{align*}
and $i_{3,1}+i_{3,2}+\cdots+i_{3,p-1}=i_{3}.$
\end{lemma}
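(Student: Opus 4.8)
The plan is to prove the statement by induction on $i$, expanding $S_f(u,v)^i = S_f(u,v)^{i-1}\cdot S_f(u,v)$ and tracking how a single monomial in the inductive hypothesis multiplies against the three types of terms appearing in $S_f(u,v)$, namely $u$, $v$, and $f\,u^{p^r\ell}v^{p^r(p-\ell)}$ for $1\le\ell\le p-1$. The base case $i=1$ is immediate: the monomial $u$ corresponds to $(i_1,i_2,i_3)=(1,0,0)$, the monomial $v$ to $(0,1,0)$, and each cross term $f\,u^{p^r\ell}v^{p^r(p-\ell)}$ to $i_3=1$ with $i_{3,\ell}=1$ and all other $i_{3,m}=0$, so that $\ell'=\ell$ and $\ell''=p-\ell$ as required.

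For the inductive step, suppose a monomial $f^{i_3}u^{i_1+p^r\ell'}v^{i_2+p^r\ell''}$ occurs in $S_f(u,v)^{i-1}$, with the stated relations among $i_1,i_2,i_3$, the composition $(i_{3,1},\dots,i_{3,p-1})$, and $\ell',\ell''$. First I would multiply by $u$: this produces $f^{i_3}u^{(i_1+1)+p^r\ell'}v^{i_2+p^r\ell''}$, which fits the desired shape for exponent $i$ with new data $i_1'=i_1+1$, $i_2'=i_2$, $i_3'=i_3$, and the composition unchanged — so $i_1'+i_2'+i_3'=i$ and $\ell',\ell''$ are still given by the same formulas. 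Multiplication by $v$ is symmetric, incrementing $i_2$. The substantive case is multiplication by $f\,u^{p^r m}v^{p^r(p-m)}$ for some $1\le m\le p-1$: here the new power of $f$ is $i_3+1$, and I would set $i_{3,m}' = i_{3,m}+1$ with all other components of the composition unchanged, so the new composition sums to $i_3+1=i_3'$. One then checks that $\ell'$ increases by exactly $m$ (since the coefficient of $i_{3,m}$ in the formula for $\ell'$ is $m$) and $\ell''$ increases by exactly $p-m$ (coefficient $p-m$), matching precisely the added exponents $p^r m$ on $u$ and $p^r(p-m)$ on $v$. Since $i_1,i_2$ are unchanged in this case, $i_1'+i_2'+i_3' = i_1+i_2+(i_3+1)=i$, completing the step. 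Finally, all coefficients that appear are products of the original $K^\times$-coefficients with the rational numbers $\tfrac{1}{\ell!(p-\ell)!}$, which lie in $K^\times$ because $K$ has characteristic $p$ and these are units in $\mathbb{F}_p$ (as $0<\ell<p$); hence every monomial that survives has a $K^\times$ coefficient, though of course distinct $(i_1,i_2,i_3,(i_{3,m}))$ data can collapse to the same monomial $f^{i_3}u^{a}v^{b}$, in which case the claim is simply that each such contribution individually has the stated form.

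The main obstacle is purely bookkeeping rather than conceptual: one must be careful that the data $(i_1,i_2,i_3)$ together with the composition $(i_{3,1},\dots,i_{3,p-1})$ — not merely the exponents appearing in the monomial — is what is being tracked, since the same monomial $f^{i_3}u^a v^b$ may arise from several choices of this data, and the formulas for $\ell',\ell''$ are statements about each individual choice. I would therefore phrase the induction as a statement about \emph{terms in the expansion before collecting}, i.e. each way of choosing one of the $p+1$ summands of $S_f(u,v)$ at each of the $i$ factors, and verify that any such choice yields data satisfying the displayed relations; the final assertion about $S_f(u,v)^i$ then follows by summing. No delicate estimate or structural input beyond $\operatorname{char}K=p$ is needed.
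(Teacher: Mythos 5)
Your proof is correct and is essentially the paper's argument in different clothing: the paper expands $S_f(u,v)^i$ directly by the multinomial theorem (once over the three summands $u$, $v$, and the $f$-term, and once more over the inner sum on $\ell$), which is exactly the bookkeeping your factor-by-factor induction performs, with the same data $(i_1,i_2,i_3)$ and composition $(i_{3,1},\dots,i_{3,p-1})$ and the same verification of $\ell'$ and $\ell''$. Your remarks about coefficients lying in $K^\times$ and about tracking uncollected terms are consistent with the paper's (implicit) conventions, so nothing further is needed.
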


\begin{proof}
This is a straightforward calculation from \cite[Lemma 5.1]{Koch14a} -- we
recall it here for the reader's convenience.

We have%
\begin{align*}
S_{f}\left(  u,v\right)  ^{i}  &  =\left(  u+v+f\sum_{\ell=1}^{p-1}%
\frac{1}{\ell!\left(  p-\ell\right)  !}u^{p^{r}\ell}v^{p^{r}\left(
p-\ell\right)  }\right)  ^{i}\\
&  =\sum_{i_{1}+i_{2}+i_{3}=i}\binom{i}{i_{1},i_{2},i_{3}}\left(  u^{i_{1}%
}v^{i_{2}}\right)  \left(  f\sum_{\ell=1}^{p-1}\frac{1}{\ell!\left(
p-\ell\right)  !}u^{p^{r}\ell}v^{p^{r}\left(  p-\ell\right)  }\right)
^{i_{3}}.
\end{align*}
The last factor in each summand can be expanded as%
\[
f^{i_{3}}\sum_{i_{3,1}+\cdots+i_{3,p-1}=i_{3}}\left(  \binom{i_{3}}%
{i_{3,1},\dots,i_{3,p-1}}\left(  \prod_{j=1}^{p-1}\frac{1}{i_{3,j}!\left(
p-i_{3,j}\right)  !}\right)  u^{^{i_{1}+p^{r}\ell^{\prime}}}v^{^{i_{2}%
+p^{r}\ell^{\prime\prime}}}\right)  .
\]

The result follows.
\end{proof}

Next, we consider powers of the $z_{p^{s}}$'s.

\begin{lemma}
For $0\leq s\leq r,\;1\leq m\leq p-1;$ or $0\leq s\leq r-1,\;m=p$ we have
$z_{p^{s}}^{m}=m!z_{mp^{s}}.$ In particular, $z_{p^{s}}^{p}=0.$
\end{lemma}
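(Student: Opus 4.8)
The key tool is Lemma \ref{thepowlem} together with the duality formula \eqref{zmult}, so the strategy is to compute $z_{p^s}^m(t^i)$ for all $i$ by reading off the coefficient of the appropriate monomial in a suitable iterated comultiplication of $t^i$. First I would observe that $z_{p^s}^m(t^i) = \operatorname{mult}(z_{p^s}^{\otimes m})\Delta^{(m-1)}(t^i)$, where $\Delta^{(m-1)}$ is the $m$-fold iterated comultiplication; since each $z_{p^s}$ picks out the coefficient of $t^{p^s}$ in its tensor slot, $z_{p^s}^m(t^i)$ equals the coefficient of $t^{p^s}\otimes t^{p^s}\otimes\cdots\otimes t^{p^s}$ ($m$ factors) in $\Delta^{(m-1)}(t^i)$. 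Because $\Delta$ is an algebra map and $t^i = t\cdot t \cdots t$, we have $\Delta^{(m-1)}(t^i) = \bigl(\Delta^{(m-1)}(t)\bigr)^i$, and $\Delta^{(m-1)}(t) = \sum_{k=1}^m 1\otimes\cdots\otimes t\otimes\cdots\otimes 1 + (\text{terms with a factor } f \text{ and exponents that are multiples of } p^r)$.

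Next I would extract, from $\bigl(\Delta^{(m-1)}(t)\bigr)^i$, exactly the monomials contributing to the target multidegree $(p^s,p^s,\dots,p^s)$. Here the hypothesis $0\le s\le r$ (resp.\ $s\le r-1$ when $m=p$) is decisive: since $p^s \le p^r$ in the allowed range (and strictly $p^s < p^r$ does \emph{not} hold in general, so one must be slightly careful — when $s=r$ the $f$-terms have exponent exactly $p^r\ell \ge p^r = p^s$, which is already too large unless $\ell=0$, and $\ell$ ranges over $1,\dots,p-1$), any monomial using one of the $f$-correction terms would force a tensor slot to have degree $\ge p^r$, which exceeds $p^s$ when $s<r$, and when $s=r$ forces degree $\ge 2p^r > p^r = p^s$ in the companion slot. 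Hence only the ``primitive part'' $\sum_k 1\otimes\cdots\otimes t\otimes\cdots\otimes 1$ contributes, and we are reduced to a pure multinomial computation: the coefficient of $\prod_k (\text{slot }k)^{p^s}$ in $\bigl(\sum_{k=1}^m e_k\bigr)^i$ where the $e_k$ are commuting ``independent'' variables, which is $\binom{i}{p^s,p^s,\dots,p^s}$ if $i = mp^s$ and $0$ otherwise. Thus $z_{p^s}^m(t^i) = \delta_{i,mp^s}\,\frac{(mp^s)!}{(p^s!)^m}$, and since $z_{mp^s}(t^i) = \delta_{i,mp^s}$, this gives $z_{p^s}^m = \frac{(mp^s)!}{(p^s!)^m}\,z_{mp^s}$.

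It then remains to identify the scalar $\frac{(mp^s)!}{(p^s!)^m}$ with $m!$ modulo the constraint that we are in characteristic $p$ (so these are really elements of $\mathbb{F}_q \subset K$). This is a Lucas-type / Kummer-type congruence computation: writing $mp^s$ in base $p$ (legitimate since $1\le m\le p$, so $m$ itself has at most a $1$-digit carry only when $m=p$, which is why the $m=p$ case needs $s\le r-1$ to keep $z_{p^s}^p$ landing on $z_{p^{s+1}}$ with index $< p^n$), one checks that the multinomial coefficient $\binom{mp^s}{p^s,\dots,p^s}$ reduces mod $p$ to $\binom{m}{1,1,\dots,1} = m!$, because all the ``carries'' in the base-$p$ addition of $m$ copies of $p^s$ occur in a single digit position and mirror exactly the carries in adding $m$ copies of $1$. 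When $m=p$ this yields $p! \equiv 0$, giving the ``In particular'' clause $z_{p^s}^p = 0$.

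The main obstacle I anticipate is the bookkeeping in step two: making the exponent-counting argument fully rigorous requires carefully ruling out \emph{all} ways an $f$-term could contribute — including the possibility that an $f$-term contributes to one slot while primitive terms make up the deficit elsewhere — and this is exactly where the hypothesis $2r \ge n$ and the precise ranges of $s$ and $m$ earn their keep. Once that combinatorial exclusion is nailed down, the remaining multinomial and congruence computations are routine.
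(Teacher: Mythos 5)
The paper itself offers no proof of this lemma: it simply cites Lemmas 5.2 and 5.3 of \cite{Koch14a} and observes that the argument there depends only on the shape of $\Delta(t)$, which is unchanged under the weaker hypothesis $2r\ge n$. So there is nothing in the paper to compare against line by line, but your direct computation is the natural one and its skeleton is right: by \eqref{zmult}, $z_{p^s}^m(t^i)$ is the coefficient of $t^{p^s}\otimes\cdots\otimes t^{p^s}$ in $\bigl(\Delta^{(m-1)}(t)\bigr)^i$; once the $f$-terms are excluded this is the multinomial coefficient $\binom{mp^s}{p^s,\dots,p^s}=\prod_{k=1}^{m}\binom{kp^s}{p^s}\equiv m!\ (\operatorname{mod}p)$ by Lucas, with $\delta_{i,mp^s}$ enforcing the support.

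Two steps need repair, however. First, your per-slot exclusion is not literally correct for $m\ge 3$: since $t^{p^r}$ is primitive (this is where $2r\ge n$ enters), the iterated comultiplication spreads the factor $t^{p^r\ell}$ of an $f$-term over several slots as $\bigl(\sum_k 1\otimes\cdots\otimes t^{p^r}\otimes\cdots\otimes 1\bigr)^{\ell}$, so no individual slot is forced to receive more than $p^r$ from it, and the claim that ``the companion slot gets degree $\ge 2p^r$'' does not hold slot by slot. The clean fix is a total-degree count: every monomial involving at least one $f$-factor has total degree at least $p^r\ell+p^r(p-\ell)=p^{r+1}$, while the target $t^{p^s}\otimes\cdots\otimes t^{p^s}$ has total degree $mp^s$, which is at most $(p-1)p^r<p^{r+1}$ when $m\le p-1,\ s\le r$, and equals $p^{s+1}\le p^r<p^{r+1}$ when $m=p,\ s\le r-1$. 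This single inequality eliminates all $f$-contributions in every allowed case. Second, your stated reason for excluding $s=r$ when $m=p$ (keeping the index $p^{s+1}$ below $p^n$) is not the real one: for, say, $n=4$ and $r=2$ the index $p^{3}$ is perfectly in range, yet $z_{p^r}^p\ne 0$ — Lemma \ref{prpow} computes $z_{p^r}^p(t)=f$. The genuine obstruction is precisely the boundary case of the total-degree count: for $m=p$ and $s=r$ the target degree is $p^{r+1}$, exactly the degree contributed by an $f$-term, so the $f$-terms survive and the identity fails. With these two corrections your argument goes through.
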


\begin{proof}
See \cite[Lemmas 5.2, 5.3]{Koch14a}. While the result there was for $n=r+1,$
its validity depended on the form of the comultiplication; the more general
$2r\geq n$ case the comultiplication has the same form., and hence a nearly
identical proof.
\end{proof}

The result above does not hold for $s>r.$ However, we do have

\begin{lemma}
\label{prpow}For $0\leq s\leq n-1,$ $1\leq j,m\leq p-1,\ $we have $z_{p^{s}%
}^{j}\left(  t^{mp^{s}}\right)  =m!\delta_{j,m}.$ Furthermore, if $s\geq r$
then $z_{p^{s}}^{p}\left(  t^{p^{i}}\right)  =f^{p^{s-r}}\delta_{i,s-r}.$ In
particular, $z_{p^{s}}^{p}\neq0.$
\end{lemma}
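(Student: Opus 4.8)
The plan is to compute $z_{p^s}^j(t^{mp^s})$ directly from the multiplicative structure \eqref{zmult}, using the formula for powers of $S_f$ provided by Lemma \ref{thepowlem}. The key observation is that, by definition, $z_{p^s}^j(h) = \operatorname{mult}(z_{p^s}^{\otimes j})\Delta^{(j-1)}(h)$ — the iterated coproduct — so $z_{p^s}^j(t^m) $ picks out the coefficient of $t^{p^s}\otimes t^{p^s}\otimes\cdots\otimes t^{p^s}$ in $\Delta^{(j-1)}(t^m)$. Equivalently, since $H_{n,r,f}=K[t]/(t^{p^n})$ with $t$ primitive-ish (its coproduct is $t\otimes 1 + 1\otimes t$ plus higher terms supported in degrees that are multiples of $p^r$), one has $\Delta(t^m) = S_f(t\otimes 1, 1\otimes t)^m$ inside $H_{n,r,f}\otimes H_{n,r,f}$, and the value $z_{j_1}z_{j_2}(t^m)$ is the coefficient of $u^{j_1}v^{j_2}$ in $S_f(u,v)^m$ modulo the relations $u^{p^n}=v^{p^n}=0$. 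This reduces everything to reading off coefficients from Lemma \ref{thepowlem}.

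For the first assertion, I would iterate: $z_{p^s}^j(t^{mp^s})$ equals the coefficient of $(t^{p^s})^{\otimes j}$ in $\Delta^{(j-1)}(t^{mp^s})$. Writing $\Delta^{(j-1)}(t^{mp^s}) = S_f^{(j-1)}(t\otimes 1\otimes\cdots, \ldots)^{mp^s}$ and using that each $S_f$-term other than the linear one lives in degrees that are multiples of $p^r \geq p^s$... actually, the cleanest route for $1\le j,m\le p-1$: the monomials in $S_f(u,v)^{mp^s}$ contributing to $u^{p^s}v^{\,(\text{something})}$ must come from total degree $(j+ \ldots)p^s$; but since $j,m\le p-1$, the degrees involved stay below $p^{s+1}$ unless one invokes an $f$-term (which jumps by $p^r \geq p^s$, and in fact $\ge p\cdot p^{s-1}$, pushing degree out of range when $s \le r$). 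The upshot is that only the "diagonal" multinomial term survives, yielding exactly $m!\,\delta_{j,m}$. I would phrase this carefully by induction on $j$, using that $z_{p^s}(t^k) = \delta_{k,p^s}$ kills everything except the degree-$p^s$ slot at each tensor factor, so the surviving contributions to $z_{p^s}^j(t^{mp^s})$ are precisely the ways of writing $t^{mp^s}$ as an ordered product of $j$ copies of $t^{p^s}$ via the linear part of the coproduct — hence $j=m$ and the count is the multinomial coefficient $\binom{m}{1,1,\ldots,1}=m!$.

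For the second assertion, with $s\ge r$, I compute $z_{p^s}^p(t^{p^i})$ as the coefficient of $(t^{p^s})^{\otimes p}$ in $\Delta^{(p-1)}(t^{p^i})$. Now the linear part of $S_f$ can only build $t^{p^i}$ out of $p$ factors each of degree $p^s$ if $p^i = p\cdot p^s = p^{s+1}$, i.e. $i=s+1$; but that would require using no $f$-term, and $i=s+1 > s \ge r$, which is allowed in principle, so I must check this does not contribute when I want $\delta_{i,s-r}$ — the resolution is that $z_{p^s}^p = 0$ on that piece by the previous lemma's companion ($z_{p^s}^p = 0$ for $s\le r-1$ was the monogenic range; for the degree bookkeeping here the relevant point is which $i$ makes the $f$-term land exactly right). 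The $f$-term in $S_f$ contributes $f\, u^{p^r\ell}v^{p^r(p-\ell)}$, i.e. a single application jumps total degree by $p^r\cdot p = p^{r+1}$; applied once in $\Delta^{(p-1)}(t^{p^i})$ we need $p^i$ to match $p^r\cdot p = p^{r+1}$ distributed as $p$ equal blocks of $p^s$ — forcing $p^{r+1}=p^{s+1}$ only if $s=r$, and more generally iterating the $f$-term $p^{s-r}$ times (each iteration multiplying by $f$ and raising the "level" by one) reaches degree $p^{s+1}$ spread as $p$ copies of $p^s$ exactly when $i = s-r$... after the $f$-expansion the total exponent of $f$ accumulates to $p^{s-r}$, matching the claimed $f^{p^{s-r}}$. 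So I would track, via Lemma \ref{thepowlem} applied recursively, that the unique surviving term forces $i = s-r$ and carries coefficient $f^{p^{s-r}}$ (the $1/\ell!(p-\ell)!$ factors and multinomials combining to a unit, in fact to $1$ on the nose by the same binomial identity $\sum 1/\ell!(p-\ell)! $ bookkeeping used in \cite{Koch14a}). Hence $z_{p^s}^p(t^{p^i}) = f^{p^{s-r}}\delta_{i,s-r}$, and since $f\in K^\times$ this is nonzero, so $z_{p^s}^p\ne 0$.

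The main obstacle I anticipate is the combinatorial bookkeeping in the second part: correctly iterating Lemma \ref{thepowlem} through the $(p-1)$-fold coproduct and verifying that exactly one monomial survives the pairing with $(t^{p^s})^{\otimes p}$, with the right power of $f$ and a unit (indeed $1$) coefficient. The first part is comparatively routine once one recognizes it as counting ordered factorizations via the primitive part of $\Delta$. I would likely organize the second part by first handling $s=r$ as a base case (where a single $f$-term does the job, giving $z_{p^r}^p(t^{p^i}) = f\,\delta_{i,0}$) and then inducting on $s-r$, relating $z_{p^s}^p$ to $z_{p^{s-1}}^p$ composed with the degree-shift induced by the $p^r$-homogeneity of the $f$-term, or alternatively just pushing the Lemma \ref{thepowlem} expansion through directly and doing the exponent arithmetic in one pass.
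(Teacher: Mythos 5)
Your overall strategy (dualize, peel off coproducts, extract the coefficient of tensor powers of $t^{p^s}$ from powers of $S_f$) is the same as the paper's, but both halves of your plan have a genuine gap at exactly the point where the lemma has new content. For the first assertion, the only case not already covered by the preceding lemma is $s\geq r$, and there your stated reason for discarding the $f$-terms --- that they jump degree by $p^{r}\geq p^{s}$, ``pushing degree out of range when $s\le r$'' --- simply does not apply when $s>r$, since then $p^{r}<p^{s}$. The observation you are missing is that the hypothesis $2r\geq n$ forces $s+r\geq n$ for $s\geq r$, so every $f$-term of $\Delta(t)^{p^{s}}$ involves $t^{p^{r+s}\ell}=0$; hence $t^{p^{s}}$ is \emph{primitive}, $\Delta(t^{mp^{s}})=(t^{p^{s}}\otimes 1+1\otimes t^{p^{s}})^{m}$, and the induction on $j$ becomes a one-line binomial computation giving $m\cdot(m-1)!=m!$. (One can instead push through a degree argument, but it requires showing $i_{3}(p^{r+1}-1)=(j-m)p^{s}$ forces $p^{s}\mid i_{3}$ and hence $j-m\geq p^{r+1}-1>p-2$, a contradiction; your sketch does not contain this.)

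For the second assertion your description of the mechanism producing $f^{p^{s-r}}$ is not correct as stated: it is not obtained by ``iterating the $f$-term $p^{s-r}$ times'' (and your own bookkeeping is inconsistent --- $p^{s-r}$ iterations each ``raising the level by one'' would overshoot). The actual source is the Frobenius identity in characteristic $p$: $\Delta(t^{p^{i}})=\Delta(t)^{p^{i}}$ is the sum of the $p^{i}$-th powers of the three pieces of $\Delta(t)$, so the single $f$-summand appears once, carrying $f^{p^{i}}$ and exponents $p^{r+i}\ell\otimes p^{r+i}(p-\ell)$. The two linear pieces die because $z_{p^{s}}(1)=z_{p^{s}}^{p-1}(1)=0$ (this also disposes of your worry about $i=s+1$; no appeal to a ``companion'' lemma is needed), and the surviving constraint $p^{r+i}(p-\ell)=p^{s}$ forces $\ell=p-1$ and $i=s-r$, whereupon the coefficient is $\frac{1}{(p-1)!}\,z_{p^{s}}^{p-1}\left(t^{(p-1)p^{s}}\right)=\frac{1}{(p-1)!}(p-1)!=1$ by the first assertion --- this is the ``unit, in fact $1$'' cancellation you hoped for but did not exhibit. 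Without the primitivity observation and the Frobenius step, the combinatorial bookkeeping you flag as the main obstacle does not close.
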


\begin{proof}
Certainly, if $s<r$ then the result follows from the previous lemma. Thus, we
will assume that $s\geq r.$ The statement $z_{p^{s}}^{j}\left(  t^{mp^{s}%
}\right)  =m!\delta_{j,m}$ is clearly true for $j=1.$ Suppose $z_{p^{s}}%
^{j-1}\left(  t^{\left(  m-1\right)  p^{s}}\right)  =\left(  m-1\right)
!\delta_{j,m-1}.$ Since $s+r\geq n$ we have that $t^{p^{s}}$ is a primitive
element, hence%
\begin{align*}
z_{p^{s}}^{j}\left(  t^{mp^{s}}\right)   &  =\operatorname{mult}\left(
z_{p^{s}}^{j-1}\otimes z_{p^{s}}\right)  \left(  t^{p^{s}}\otimes1+1\otimes
t^{p^{s}}\right)  ^{m}\\
&  =\sum_{i=0}^{m}\binom{m}{i}z_{p^{s}}^{j-1}\left(  t^{ip^{s}}\right)
z_{p^{s}}\left(  t^{\left(  m-i\right)  p^{s}}\right)  .
\end{align*}
Recalling that $z_{i}\left(  t^{j}\right)  =0$ for $i\neq j,$ for this to be
nonzero, we require $i=j-1$ and $m-i=1.$ Thus, $m=j$ and
\begin{align*}
z_{p^{s}}^{m}\left(  t^{mp^{s}}\right)   &  =\binom{m}{m-1}z_{p^{s}}%
^{m-1}\left(  t^{p^{s}\left(  m-1\right)  }\right)  z_{p^{s}}\left(  t^{p^{s}%
}\right) \\
&  =m\left(  m-1\right)  !\\
&  =m!,
\end{align*}
proving the first statement of the lemma.

For the second, we have%
\begin{align*}
z_{p^{s}}^{p}\left(  t^{p^{i}}\right)   &  =\operatorname{mult}\left(
z_{p^{s}}^{p-1}\otimes z_{p^{s}}\right)  \left(  t\otimes1+1\otimes
t+f\sum_{\ell=1}^{p-1}\frac{1}{\ell!\left(  p-\ell\right)  !}t^{p^{r}\ell
}\otimes t^{p^{r}\left(  p-\ell\right)  }\right)  ^{p^{i}}\\
&  =z_{p^{s}}^{p-1}\left(  t^{p^{i}}\right)  z_{p^{s}}\left(  1\right)
+z_{p^{s}}^{p-1}\left(  1\right)  z_{p^{s}}\left(  t^{p^{i}}\right)
+f^{p^{i}}\sum_{\ell=1}^{p-1}\frac{1}{\ell!\left(  p-\ell\right)  !}z_{p^{s}%
}^{p-1}\left(  t^{p^{r+i}\ell}\right)  z_{p^{s}}\left(  t^{p^{r+i}\left(
p-\ell\right)  }\right)  .
\end{align*}
Since $z_{p^{s}}\left(  1\right)  =0$ we may ignore the first two terms, and
so%
\[
z_{p^{s}}^{p}\left(  t^{p^{i}}\right)  =f^{p^{i}}\sum_{\ell=1}^{p-1}%
\frac{1}{\ell!\left(  p-\ell\right)  !}z_{p^{s}}^{p-1}\left(  t^{p^{r+i}\ell
}\right)  z_{p^{s}}\left(  t^{p^{r+i}\left(  p-\ell\right)  }\right)  .
\]
In order that a summand be nonzero we require $p^{r+i}\left(  p-\ell\right)
=p^{s},$ i.e. $\ell=p-1,$ and hence $i=s-r.$ We have, since $z_{p^{s}}%
^{p-1}\left(  t^{mp^{s}}\right)  =\left(  p-1\right)  !\delta_{p-1,m},$%
\begin{align*}
z_{p^{s}}^{p}\left(  t^{p^{i}}\right)   &  =f^{p^{s-r}}\frac{1}{\left(
p-1\right)  !}z_{p^{s}}^{p-1}\left(  t^{p^{s}\left(  p-1\right)  }\right) \\
&  =f^{p^{s-r}}\frac{1}{\left(  p-1\right)  !}\left(  p-1\right)  !\\
&  =f^{p^{s-r}}.
\end{align*}
For $i\neq s-r$ we have $z_{p^{s}}^{p}\left(  t^{p^{i}}\right)  =0.$
\end{proof}

It can be shown that the set
\[
\left\{  \prod_{s=0}^{n-1}z_{p^{s}}^{j_{s}}:0\leq j_{s}\leq p-1\right\}
\]
is a $K$-basis for $H.$ A formal proof will be given in section 5. By counting
dimensions, it is clear that $z_{p^{s}}^{p^{2}}=0$ for $r\leq s\leq n-1.$

The coalgebra structure on $H$ is induced from the multiplication on
$H_{n,r,f}$ and is simply
\[
\Delta\left(  z_{j}\right)  =\sum_{i=0}^{j}z_{j-i}\otimes z_{i}.
\]

\section{The Hopf Galois Action}

In \cite{Koch14a} we describe how $L$ can be viewed as an $H_{n,r,f}$-Galois
object. Since $2r\geq n$ the $K$-algebra map $\alpha:L\rightarrow L\otimes
H_{n,r,f}$ given by%
\begin{equation}
\alpha\left(  x\right)  =x\otimes1+1\otimes t+f\sum_{\ell=1}^{p-1}%
\frac{1}{\ell!\left(  p-\ell\right)  !}x^{p^{r}\ell}\otimes t^{p^{r}\left(
p-\ell\right)  }\label{thisf}%
\end{equation}
provides an $H_{n,r,f}$-comodule structure on $L$; furthermore, the map
$\gamma:L\otimes L\rightarrow L\otimes H_{n,r,f}$ given by $\gamma\left(
x^{i}\otimes x^{j}\right)  =x^{i}\alpha\left(  x^{j}\right)  $ is an
isomorphism, hence $L$ is an $H_{n,r,f}$-Galois object. In this section, we
describe the induced action of $H=H_{n,r,f}^{\ast}$ on $L$ which makes $L/K$
an $H$-Galois extension.

Before proceeding, notice that this action depends on two choices:\ the choice
of $x,$ the $K$-algebra generator for $L,$ and the choice of $t,$ the
$K$-algebra generator for $H_{n,r,f}.$ By replacing $x$ with $x^{\prime
},\;p\nmid v_{L}\left(  x^{\prime}\right)  $ we may define
\[
\alpha^{x^{\prime}}\left(  x^{\prime}\right)  =x^{\prime}\otimes1+1\otimes
t+f\sum_{\ell=1}^{p-1}\frac{1}{\ell!\left(  p-\ell\right)  !}\left(
x^{\prime}\right)  ^{p^{r}\ell}\otimes t^{p^{r}\left(  p-\ell\right)  }%
\]
and obtain a different coalgebra structure. Alternatively, if we replace $t$
with, say, $t_{g}:=gt,\;g\in K^{\times}$ we may define
\[
\alpha_{g}\left(  x\right)  =x\otimes1+1\otimes t_{g}+fg^{1-p^{r+1}}\sum
_{\ell=1}^{p-1}\frac{1}{\ell!\left(  p-\ell\right)  !}x^{p^{r}\ell}\otimes
t_{g}^{p^{r}\left(  p-\ell\right)  }%
\]
which also results in a different coalgebra structure. Furthermore, each of
the coalgebra structures here give $L$ the structure of an $H_{n,r,f}$-Galois
object. Combined, we have coactions given by%
\[
\alpha_{h}^{y}\left(  y\right)  =y\otimes1+1\otimes t_{h}+fh^{1-p^{r+1}}%
\sum_{\ell=1}^{p-1}\frac{1}{\ell!\left(  p-\ell\right)  !}y^{p^{r}\ell}\otimes
t_{h}^{p^{r}\left(  p-\ell\right)  },\;y\in L^{\times},\;p\nmid v_{L}\left(
y\right)  ,\;h\in K^{\times}%
\]
although some choices of $h,y$ produce the same actions, e.g. $\alpha_{1}%
^{x}=\alpha_{T^{-1}}^{Tx}.$ By fixing $x\in L$ we eliminate some of the
ambiguity as to which coaction is being used. For the rest, notice that
$K\left[  t_{g}\right]  /\left(  t_{g}^{p^{n}}\right)  =H_{n,r,fg^{1-p^{r+1}}%
}$, and so $H_{n,r,f}=H_{n,r,fg^{1-p^{r+1}}}$ for any choice of $g\in
K^{\times}$, hence choosing the $K$-algebra generator for the Hopf algebra is
equivalent to choosing a representative of a coset in $K^{\times}/\left(
K^{\times}\right)  ^{p^{r+1}-1};$ once such a choice $f$ is made, it is
assumed that the coaction of $H_{n,r,f}$ follows the coaction given in eq.
$\left(  \ref{thisf}\right)  $. In other words, we will always use the action
$\alpha_{1}^{x}.$

Generally, if $A$ is a $K$-Hopf algebra such that $L$ is an $A$-Galois object,
then $A^{\ast}$ acts on $L$ by%
\begin{equation}
h\left(  y\right)  =\operatorname{mult}\left(  1\otimes h\right)
\alpha\left(  y\right)  ,\;h\in A^{\ast},y\in L. \label{act}%
\end{equation}
As $H$ is generated by $\left\{  z_{p^{s}}:0\leq s\leq n-1\right\}  ,$ it
suffices to compute $z_{p^{s}}\left(  x^{i}\right)  $ for $0\leq s\leq
n-1,\;1\leq i\leq p^{n}-1$.

\begin{proposition}
\label{action}For $0\leq i\leq p^{n}-1$, write%
\[
i=\sum_{s=0}^{n-1}i_{s}p^{\ell}.
\]
Then, for $0\leq s\leq r-1$ we have%
\[
z_{p^{s}}\left(  x^{i}\right)  =i_{s}x^{i-p^{s}}.
\]
Additionally,%
\[
z_{p^{r}}\left(  x^{i}\right)  =i_{r}x^{i-p^{r}}-ifx^{p^{r}\left(  p-1\right)
+i-1}.
\]
\end{proposition}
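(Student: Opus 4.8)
The plan is to compute $z_{p^s}(x^i)$ directly from the action formula \eqref{act}, which says $z_{p^s}(x^i) = \operatorname{mult}(1\otimes z_{p^s})\alpha(x^i)$. Since $\alpha$ is a $K$-algebra map, $\alpha(x^i) = \alpha(x)^i = S_f(x,t)^i$ in the notation of Lemma \ref{thepowlem} (with $u$ replaced by $x$ and $v$ replaced by $t$). So the first step is to write out $S_f(x,t)^i$ as a $K^\times$-linear combination of terms $f^{i_3}x^{i_1+p^r\ell'}t^{i_2+p^r\ell''}$ using Lemma \ref{thepowlem}, then apply $\operatorname{mult}(1\otimes z_{p^s})$, which picks out exactly those terms whose $t$-exponent equals $p^s$ and multiplies the coefficient by the value $z_{p^s}(t^{p^s})=1$ on them (all other terms die since $z_{p^s}(t^k)=\delta_{k,p^s}$).

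Next I would analyze the constraint $i_2 + p^r\ell'' = p^s$ in the two cases. For $0\le s \le r-1$: since $\ell'' = (p-1)i_{3,1}+\cdots+i_{3,p-1}\ge 0$ and $p^r\ell'' < p^s < p^r$ forces $\ell''=0$, hence $i_2 = p^s$. But $\ell''=0$ together with the $i_{3,j}\ge 0$ forces every $i_{3,j}=0$, so $i_3=0$, $\ell'=0$, and then $i_1 = i - i_2 = i - p^s$. Thus only the "linear" part of $S_f$ contributes, and the surviving term is the multinomial coefficient $\binom{i}{i-p^s, p^s, 0}$ times $x^{i-p^s}t^{p^s}$; after applying $\operatorname{mult}(1\otimes z_{p^s})$ this becomes $\binom{i}{p^s}x^{i-p^s}$. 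The key arithmetic fact is that $\binom{i}{p^s}\equiv i_s \bmod p$ by Lucas' theorem (using the $p$-adic digits of $i$), so $z_{p^s}(x^i) = i_s x^{i-p^s}$ in $L$ (characteristic $p$). For $s=r$: now the constraint $i_2 + p^r\ell'' = p^r$ has two solutions, either $\ell''=0, i_2=p^r$ (giving as before the contribution $\binom{i}{p^r}x^{i-p^r} = i_r x^{i-p^r}$ by Lucas), or $\ell''=1, i_2=0$. The case $\ell''=1$ forces $i_{3,p-1}=1$ and all other $i_{3,j}=0$, so $i_3=1$, $\ell' = p-1$, $i_1 = i-1$; the corresponding term of $S_f(x,t)^i$ is $\binom{i}{i-1,0,1}\cdot\frac{1}{(p-1)!\,1!}\cdot f\cdot x^{i-1+p^r(p-1)}t^{p^r} = \frac{i}{(p-1)!}f x^{p^r(p-1)+i-1}t^{p^r}$, which after applying $\operatorname{mult}(1\otimes z_{p^r})$ contributes $\frac{i}{(p-1)!}f x^{p^r(p-1)+i-1}$. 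Since $(p-1)!\equiv -1 \bmod p$ (Wilson), this equals $-if x^{p^r(p-1)+i-1}$, and adding the two contributions gives the stated formula.

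The main obstacle I anticipate is bookkeeping the expansion of $S_f(x,t)^i$ carefully enough to be sure no other $(i_1,i_2,i_3)$-term (or $(i_{3,1},\dots,i_{3,p-1})$-subterm) has $t$-exponent equal to $p^s$; in particular one must rule out contributions with $i_3\ge 2$ in the $s=r$ case, which is handled by noting $\ell''\ge i_3$ is false in general but $p^r\ell'' = p^r - i_2 \le p^r$ forces $\ell''\le 1$, and $\ell''\le 1$ with all $i_{3,j}\ge 0$ forces $i_3\le 1$. The other point requiring a little care is the reduction of the multinomial/factorial coefficients modulo $p$: Lucas' theorem for $\binom{i}{p^s}\bmod p$ and Wilson's theorem for $(p-1)!\bmod p$ are exactly what convert the characteristic-zero combinatorial identities into the clean characteristic-$p$ statements. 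Everything else is a routine substitution.
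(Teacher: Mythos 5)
Your proposal is correct and follows essentially the same route as the paper's proof: expand $\alpha(x^i)=S_f(x\otimes 1,1\otimes t)^i$ via Lemma \ref{thepowlem}, solve the constraint $i_2+p^r\ell^{\prime\prime}=p^s$ to isolate the surviving terms (one term when $s<r$, two when $s=r$), and evaluate the coefficients using Lucas' theorem and, for the extra $s=r$ term, $(p-1)!\equiv -1$. One minor aside: your parenthetical remark that the inequality $\ell^{\prime\prime}\ge i_3$ fails in general is itself mistaken --- since each coefficient $p-j\ge 1$ one always has $\ell^{\prime\prime}=\sum_{j}(p-j)i_{3,j}\ge\sum_j i_{3,j}=i_3$, and this is precisely what rules out $i_3\ge 2$; your stated conclusion is right regardless.
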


\begin{remark}
Note that if $i<p^{s}$ then $z_{p^{s}}\left(  x^{i}\right)  =0,$ and if
$i<p^{r}$ then $z_{p^{r}}\left(  x^{i}\right)  =-ifx^{p^{r}\left(  p-1\right)
+i-1}.$
\end{remark}

\begin{proof}
We have%
\begin{align*}
z_{p^{s}}\left(  x^{i}\right)   &  =\operatorname{mult}\left(  1\otimes
z_{p^{s}}\right)  \alpha\left(  x^{i}\right) \\
&  =\operatorname{mult}\left(  1\otimes z_{p^{s}}\right)  S_{f}\left(
x\otimes1,1\otimes t\right)  ^{i}\\
&  =\operatorname{mult}\left(  1\otimes z_{p^{s}}\right)  \left(
x\otimes1+1\otimes t+f\sum_{\ell=1}^{p-1}\frac{1}{\ell!\left(  p-\ell\right)
!}x^{p^{r}\ell}\otimes t^{p^{r}\left(  p-\ell\right)  }\right)  ^{i}\\
&  =\operatorname{mult}\left(  1\otimes z_{p^{s}}\right)  \sum_{i_{1}%
+i_{2}+i_{3}=i}\binom{i}{i_{1},i_{2},i_{3}}\left(  x^{i_{1}}\otimes t^{i_{2}%
}\right)  \left(  f\sum_{\ell=1}^{p-1}\frac{1}{\ell!\left(  p-\ell\right)
!}x^{p^{r}\ell}\otimes t^{p^{r}\left(  p-\ell\right)  }\right)  ^{i_{3}}.
\end{align*}
When simplified, the tensors are of the form $x^{i_{1}+i_{3}p^{r}\ell^{\prime
}}\otimes t^{i_{2}+i_{3}p^{r}\ell^{\prime\prime}},$ $\ell^{\prime}%
,\ell^{\prime\prime}$ as before. Applying $1\otimes z_{p^{s}}$ to each tensor
will give $0$ unless%
\begin{equation}
p^{s}=i_{2}+i_{3}p^{r}\ell^{\prime\prime}. \label{spow}%
\end{equation}
Assume first that $s<r.$ Since $p^{r}>p^{s}$ we see that $\ell^{\prime\prime
}=0.$ This can only occur if $i_{3}=0.$ Thus $i_{2}=p^{s}$ and $i_{1}%
=i-p^{s},$ giving%
\begin{align*}
z_{p^{s}}\left(  x^{i}\right)   &  =\binom{i}{i-p^{s},p^{s},0}x^{i-p^{s}%
}z_{p^{s}}\left(  t^{p^{s}}\right) \\
&  =\binom{i}{p^{s}}x^{i-p^{s}}\\
&  =i_{s}x^{i-p^{s}},
\end{align*}
the last equality following from Lucas' Theorem (see \cite{Fine47}). Thus
$z_{p^{s}}\left(  x^{i}\right)  =i_{s}x^{i-p^{s}},$ as desired.

Now we consider the case $s=r.$ Then $i_{3}=0,\;i_{2}=p^{r},\;i_{1}=i-p^{r}$
certainly satisfies eq. $\left(  \ref{spow}\right)  $. However, we get an
additional solution to this equation, namely $i_{3}=1,\;\ell^{\prime
}=p-1,\;\ell^{\prime\prime}=1,\;i_{2}=0,\;i_{1}=i-1$ -- as
\[
i_{2}+i_{3}p^{r}\ell^{\prime\prime}=p^{r}\left(  p-\ell\right)  ,
\]
with this solution we have the left-hand side equal $0+p^{r}\left(  1\right)
=p^{r},$ hence $\ell=p-1$. Thus%
\begin{align*}
z_{p^{r}}\left(  x^{i}\right)   &  =\binom{i}{i-p^{r},p^{r},0}x^{i-p^{r}%
}z_{p^{r}}\left(  t^{p^{r}}\right)  +\binom{i}{i-1,0,1}x^{i-1}f\frac{1}%
{\left(  p-1\right)  !\left(  p-\left(  p-1\right)  \right)  !}x^{p^{r}\left(
p-1\right)  }z_{p^{r}}\left(  t^{p^{r}}\right) \\
&  =i_{r}x^{i-p^{r}}-ifx^{p^{r}\left(  p-1\right)  +i-1}.
\end{align*}
\end{proof}

Much like it was for the algebra structure, describing the action for $s>r$ is
more complicated as eq. $\left(  \ref{spow}\right)  $ can have numerous
solutions. However, in the sequel we will be able to effectively study how the
valuation of an element of $L$ changes when $z_{p^{s}}$ is applied.

\section{A Scaffold on $H$}

Recall that $L=K\left(  x\right)  ,\;x^{p^{n}}=\beta,\;v_{L}\left(  x\right)
=v_{K}\left(  \beta\right)  =-b,\;p\nmid b.$ In this section we build an
$H$-scaffold for $L$ using the action above. Initially, we will insist on a
restriction on $f,$ however this restriction will ultimately not be necessary.

We start by determining the effect of applying $z_{p^{s}}$ to powers of $x.$
The first result is fundamental.

\begin{proposition}
\label{tol}Let $0\leq s\leq n-1,\;1\leq i\leq p^{n}-1.$ Write $i=\sum
_{s=0}^{n-1}i_{s}p^{s}.\;If$ $v_{K}\left(  f\right)  \geq bp^{r+1-n}$ then
\[
z_{p^{s}}\left(  x^{i}\right)  \equiv i_{s}x^{i-p^{s}}\,\operatorname{mod}%
x^{i-p^{s}}\mathfrak{P}_{L}^{\mathfrak{T}}%
\]
where $\mathfrak{T}=p^{n}v_{K}\left(  f\right)  -b\left(  p^{r+1}-1\right)  .$
\end{proposition}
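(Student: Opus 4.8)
The plan is to analyze the expansion of $z_{p^{s}}(x^{i})$ term by term using Lemma \ref{thepowlem} (applied, via eq. \eqref{act}, to $S_{f}(x\otimes 1, 1\otimes t)^{i}$), and to show that every contribution other than the ``main term'' $i_{s}x^{i-p^{s}}$ lies in $x^{i-p^{s}}\mathfrak{P}_{L}^{\mathfrak{T}}$. From the proof of Proposition \ref{action}, a tensor surviving $1\otimes z_{p^{s}}$ must satisfy $p^{s}=i_{2}+i_{3}p^{r}\ell^{\prime\prime}$ with $i=i_{1}+i_{2}+i_{3}$; such a term contributes a scalar multiple of $f^{i_{3}}x^{i_{1}+i_{3}p^{r}\ell^{\prime}}$. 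First I would isolate the case $i_{3}=0$: then $i_{2}=p^{s}$, $i_{1}=i-p^{s}$, and by Lucas' Theorem the coefficient is exactly $i_{s}$, giving the main term $i_{s}x^{i-p^{s}}$. So it remains to bound the terms with $i_{3}\geq 1$.

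For a term with $i_{3}\geq 1$, the exponent of $x$ is $i_{1}+i_{3}p^{r}\ell^{\prime}$. I would compare its valuation to that of the main term. Since $v_{L}(x)=-b$, the valuation of $f^{i_{3}}x^{i_{1}+i_{3}p^{r}\ell^{\prime}}$ is $i_{3}\,v_{L}(f)-b(i_{1}+i_{3}p^{r}\ell^{\prime}) = i_{3}p^{n}v_{K}(f) - b(i_{1}+i_{3}p^{r}\ell^{\prime})$, while $v_{L}(x^{i-p^{s}}) = -b(i-p^{s}) = -b(i_{1}+i_{2}+i_{3}-p^{s})$. Using $i_{2}+i_{3}p^{r}\ell^{\prime\prime} = p^{s}$, the difference (new term minus main term, as a valuation) works out to
\[
i_{3}p^{n}v_{K}(f) - b\,i_{3}p^{r}\ell^{\prime} - b\,i_{3}p^{r}\ell^{\prime\prime} + b\,i_{3} = i_{3}\left(p^{n}v_{K}(f) - b p^{r}(\ell^{\prime}+\ell^{\prime\prime}) + b\right).
\]
Now $\ell^{\prime}+\ell^{\prime\prime} = \sum_{j}(i_{3,j}\cdot j + i_{3,j}(p-j)) = p\sum_{j}i_{3,j} = p\,i_{3} \geq p$ since $i_{3}\geq 1$ (here $\ell^{\prime},\ell^{\prime\prime}$ are as in Lemma \ref{thepowlem}). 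Hence $\ell^{\prime}+\ell^{\prime\prime}\geq p$, so $b p^{r}(\ell^{\prime}+\ell^{\prime\prime}) \geq b p^{r+1}$, and the difference is at least $i_{3}(p^{n}v_{K}(f) - b p^{r+1} + b) = i_{3}\,\mathfrak{T} \geq \mathfrak{T}$. Thus each such term lies in $x^{i-p^{s}}\mathfrak{P}_{L}^{\mathfrak{T}}$, as required. Finally I would note that the hypothesis $v_{K}(f)\geq b p^{r+1-n}$ is exactly what guarantees $\mathfrak{T} = p^{n}v_{K}(f) - b(p^{r+1}-1) \geq b > 1$, so the tolerance is a genuine integer exceeding $1$ and Definition \ref{bcedef} applies.

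The main obstacle is bookkeeping rather than conceptual: one must be careful that the bound $\ell^{\prime}+\ell^{\prime\prime} = p\,i_{3}$ holds on the nose (it follows immediately from the defining relations in Lemma \ref{thepowlem} once one adds the two displayed sums), and that when $i_{3}=0$ Lucas' theorem applies cleanly to pin the coefficient to $i_{s}$ — both of which are already essentially done in the proof of Proposition \ref{action}. A secondary point to watch is that the estimate is uniform in which particular surviving tensor we consider, since for fixed $i_{3}$ there may be several choices of $(i_{1},i_{2},\ell^{\prime},\ell^{\prime\prime})$; but the bound above depends only on $i_{3}\geq 1$, so uniformity is automatic. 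Once these are in hand the proposition follows by summing the (finitely many) terms.
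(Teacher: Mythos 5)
Your strategy is exactly the paper's: expand $z_{p^{s}}(x^{i})$ via Lemma \ref{thepowlem}, identify the $i_{3}=0$ contribution as $i_{s}x^{i-p^{s}}$ by Lucas' Theorem, and show every $i_{3}\geq 1$ term exceeds $v_{L}(x^{i-p^{s}})$ in valuation by at least $\mathfrak{T}$; the final answer (the gap is $i_{3}\mathfrak{T}\geq\mathfrak{T}$) is also the paper's. However, one step is wrong as literally written. You import from Proposition \ref{action} the exponents $i_{1}+i_{3}p^{r}\ell^{\prime}$ and the survival condition $p^{s}=i_{2}+i_{3}p^{r}\ell^{\prime\prime}$, but then use the identity $\ell^{\prime}+\ell^{\prime\prime}=p\,i_{3}$ from Lemma \ref{thepowlem}. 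These conventions are incompatible: in Lemma \ref{thepowlem} the exponents are $i_{1}+p^{r}\ell^{\prime}$ and $i_{2}+p^{r}\ell^{\prime\prime}$ (no extra factor of $i_{3}$), and it is with those exponents that $\ell^{\prime}+\ell^{\prime\prime}=p\,i_{3}$. With your doubled-up exponents the valuation difference is $i_{3}\left(p^{n}v_{K}(f)-bp^{r}(\ell^{\prime}+\ell^{\prime\prime})+b\right)$, and passing from $\ell^{\prime}+\ell^{\prime\prime}\geq p$ to ``the difference is at least $i_{3}(p^{n}v_{K}(f)-bp^{r+1}+b)$'' goes the wrong way: enlarging $\ell^{\prime}+\ell^{\prime\prime}$ makes $-bp^{r}(\ell^{\prime}+\ell^{\prime\prime})$ more negative, so you obtain an upper bound rather than a lower bound, and the chain breaks for $i_{3}\geq 2$. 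The repair is to use Lemma \ref{thepowlem}'s exponents throughout: the difference is then $i_{3}p^{n}v_{K}(f)-bp^{r}(\ell^{\prime}+\ell^{\prime\prime})+bi_{3}=i_{3}(p^{n}v_{K}(f)-bp^{r+1}+b)=i_{3}\mathfrak{T}$ exactly, and $i_{3}\mathfrak{T}\geq\mathfrak{T}$ because the hypothesis $v_{K}(f)\geq bp^{r+1-n}$ gives $\mathfrak{T}\geq b>0$. This repaired computation is precisely the paper's, which writes each $i_{3}\geq 1$ term as $c\,f^{i_{3}}x^{i_{3}(p^{r+1}-1)}\cdot x^{i-p^{s}}$ and computes $v_{L}\left(f^{i_{3}}x^{i_{3}(p^{r+1}-1)}\right)=i_{3}(p^{n}v_{K}(f)-bp^{r+1}+b)$. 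One further small point: the hypothesis only yields $\mathfrak{T}\geq b$, which need not exceed $1$ (e.g.\ $b=1$); the proposition itself does not need $\mathfrak{T}>1$, and the paper secures $\mathfrak{T}>1$ for Definition \ref{bcedef} only later by strengthening to $v_{K}(f)>bp^{r+1-n}$.
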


\begin{proof}
Since $z_{p^{s}}\left(  x^{i}\right)  =\left(  1\otimes z_{p^{s}}\right)
\left(  \alpha\left(  x\right)  \right)  ^{i}$ we have%
\[
z_{p^{s}}\left(  x^{i}\right)  =\sum_{i_{1}+i_{2}+i_{3}=i}f^{i_{3}}%
\sum_{i_{3,1}+\cdots+i_{3,p-1}=i_{3}}c_{i_{1},i_{2},i_{3}}x^{i_{1}+p^{r}%
\ell^{\prime}}z_{p^{s}}\left(  t^{i_{2}+p^{r}\ell^{\prime\prime}}\right)
\]
where $c_{i_{1},i_{2},i_{3}}\in K^{\times}$ and, as before,%
\begin{align*}
\ell^{\prime}  &  =i_{3,1}+2i_{3,2}+\cdots+\left(  p-1\right)  i_{3,p-1}\\
\ell^{\prime\prime}  &  =\left(  p-1\right)  i_{3,1}+\left(  p-2\right)
i_{3,2}\cdots+i_{3,p-1}.
\end{align*}
For a summand to be nontrivial we require $i_{2}+p^{r}\ell^{\prime\prime
}=p^{s},$ in which case the summand is a $K^{\times}$-multiple of $f^{i_{3}%
}x^{i_{1}+p^{r}\ell^{\prime}}.$

If $s<r$ then Lemma \ref{action} gives
\[
z_{p^{s}}\left(  x^{i}\right)  =i_{s}x^{i-p^{s}},
\]
and clearly the desired congruence holds.

Now suppose $s\geq r.$ Then $z_{p^{s}}\left(  x^{i}\right)  $ will again
contain the summand $i_{\left(  s\right)  }x^{i-p^{s}}$ arising from
$i_{3}=0,$ however there may be positive choices of $i_{3}$ which make
$i_{2}+p^{r}\ell^{\prime\prime}=p^{s}.$ Since $i_{3}\leq\ell^{\prime\prime
}\leq\left(  p-1\right)  i_{3}$ it follows that $i_{3}\leq p^{s-r}.$ For an
$\ell^{\prime\prime}$ in this interval we have $i_{2}=p^{s}-p^{r}\ell
^{\prime\prime}$ and $i_{1}=i-\left(  p^{s}-p^{r}\ell^{\prime\prime}\right)
-i_{3}.$ Since $\ell^{\prime}+\ell^{\prime\prime}=pi_{3},$ the $i_{3}>0$ terms
in the summand are all of the form%
\begin{align*}
c_{i_{1},i_{2},i_{3}}f^{i_{3}}x^{i-\left(  p^{s}-p^{r}\ell^{\prime\prime
}\right)  -i_{3}+p^{r}\left(  pi_{3}-\ell^{\prime\prime}\right)  }  &
=c_{i_{1},i_{2},i_{3}}f^{i_{3}}x^{i-p^{s}+i_{3}\left(  p^{r+1}-1\right)  }\\
&  =\left(  c_{i_{1},i_{2},i_{3}}f^{i_{3}}x^{i_{3}\left(  p^{r+1}-1\right)
}\right)  x^{i-p^{s}}.
\end{align*}
Thus%
\[
z_{p^{s}}\left(  x^{i}\right)  =i_{s}x^{i-p^{s}}+\sum\left(  c_{i_{1}%
,i_{2},i_{3}}f^{i_{3}}x^{i_{3}\left(  p^{r+1}-1\right)  }\right)  x^{i-p^{s}%
},
\]
where the sum is taken over all $i_{1},i_{2},i_{3}$ with $i_{3}>0.$ Now for
$i_{3}\geq1,$%
\begin{align*}
v_{L}\left(  f^{i_{3}}x^{i_{3}\left(  p^{r+1}-1\right)  }\right)   &
=p^{n}i_{3}v_{K}\left(  f\right)  -b\left(  i_{3}\left(  p^{r+1}-1\right)
\right) \\
&  =i_{3}\left(  p^{n}v_{K}\left(  f\right)  -bp^{r+1}+b\right)  ,
\end{align*}
and since $v_{K}\left(  f\right)  \geq bp^{r+1-n}$ this expression is
minimized when $i_{3}$ is minimized, i.e., $i_{3}=1.$ Thus%
\[
v_{L}\left(  f^{i_{3}}x^{i_{3}\left(  p^{r+1}-1\right)  }\right)  \geq
p^{n}v_{K}\left(  f\right)  -b\left(  p^{r+1}-1\right)  ,
\]
so $f^{i_{3}}x^{i_{3}\left(  p^{r+1}-1\right)  }\in\mathfrak{P}_{L}%
^{\mathfrak{T}}$, $\mathfrak{T}=p^{n}v_{K}\left(  f\right)  -b\left(
p^{r+1}-1\right)  .$ Hence,
\[
z_{p^{s}}\left(  x^{i}\right)  =i_{s}x^{i-p^{s}}\left(  1+\sum c_{i_{1}%
,i_{2},i_{3}}f^{i_{3}}x^{i_{3}\left(  p^{r+1}-1\right)  }\right)
\]
and so%
\[
z_{p^{s}}\left(  x^{i}\right)  \equiv i_{s}x^{i-p^{s}}\,\operatorname{mod}%
x^{i-p^{s}}\mathfrak{P}_{L}^{\mathfrak{T}}.
\]
\end{proof}

As we have seen, the restriction on $v_{K}\left(  f\right)  $ is not a
restriction on the Hopf algebra, merely on the ways in which this Hopf algebra
can act on $L.$ We must write $H=H_{n,r,f}^{\ast},\;v_{K}\left(  f\right)
\geq bp^{r+1-n}$ for the action (induced from the coaction in eq. $\left(
\ref{thisf}\right)  $ for this choice of $f$) to provide an $H$-scaffold. As
$H_{n,r,f}=H_{n,r,T^{p^{p+1}-1}f},$ it is clear that there will be an infinite
number of actions of $H$ on $L$ which produce scaffolds. To ensure a scaffold
of tolerance $\mathfrak{T}>1$ we require a slight increase in the lower bound
for $v_{K}\left(  f\right)  $. For the rest of the section, we shall assume
$v_{K}\left(  f\right)  >bp^{r+1-n}.$

\begin{theorem}
\label{scaffy}For $v_{K}\left(  f\right)  >bp^{r+1-n},$ the set $\left\{
z_{1}^{j_{0}}z_{p}^{j_{1}}\cdots z_{p^{n-1}}^{j_{n-1}}:0\leq j_{s}\leq
p-1\right\}  $ constructed above is an $H$-scaffold on $L$ with tolerance
$\mathfrak{T}=p^{n}v_{K}\left(  f\right)  -b\left(  p^{r+1}-1\right)  >1.$
\end{theorem}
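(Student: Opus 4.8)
The plan is to verify that the hypotheses of Lemma \ref{Hscaf} are met by the set $\left\{ z_{p^{s}}:0\leq s\leq n-1\right\}$ acting on $L$ via the action $\alpha_{1}^{x}$ of eq.~$\left(\ref{thisf}\right)$, and then to check that the resulting tolerance exceeds $1$. The first task is essentially already done: Proposition \ref{tol} states that under the hypothesis $v_{K}\left(f\right)\geq bp^{r+1-n}$ we have $z_{p^{s}}\left(x^{i}\right)\equiv i_{s}x^{i-p^{s}}\,\operatorname{mod}x^{i-p^{s}}\mathfrak{P}_{L}^{\mathfrak{T}}$ with $\mathfrak{T}=p^{n}v_{K}\left(f\right)-b\left(p^{r+1}-1\right)$, which is precisely the input Lemma \ref{Hscaf} requires, provided $\mathfrak{T}>1$. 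So the structure of the argument is: (i) observe $v_{K}\left(f\right)>bp^{r+1-n}$ forces $\mathfrak{T}>1$ by a one-line inequality, namely $\mathfrak{T}=p^{n}v_{K}\left(f\right)-b\left(p^{r+1}-1\right)>p^{n}\cdot bp^{r+1-n}-b\left(p^{r+1}-1\right)=bp^{r+1}-bp^{r+1}+b=b\geq1$; (ii) since $v_{K}\left(f\right)>bp^{r+1-n}\geq bp^{r+1-n}$ in particular satisfies the hypothesis of Proposition \ref{tol}, that proposition applies; (iii) invoke Lemma \ref{Hscaf} with this $\mathfrak{T}$ to conclude that $\left\{\lambda_{j}\right\},\left\{z_{p^{s}}\right\}$ form a scaffold of tolerance $\mathfrak{T}$, where the $\lambda_{j}$ are the elements $T^{\left(j+b\operatorname*{res}\left(aj\right)\right)/p^{n}}x^{\operatorname*{res}\left(aj\right)}$ prescribed there.

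The remaining point requiring attention is the gap between the statement of Lemma \ref{Hscaf}, whose conclusion names the scaffold as the pair $\left(\left\{\lambda_{j}\right\},\left\{z_{p^{s}}\right\}\right)$, and the statement of Theorem \ref{scaffy}, which asserts that the larger set $\left\{z_{1}^{j_{0}}z_{p}^{j_{1}}\cdots z_{p^{n-1}}^{j_{n-1}}:0\leq j_{s}\leq p-1\right\}$ is ``an $H$-scaffold.'' I would reconcile these by appealing to the discussion immediately following Definition \ref{bcedef}: once the generators $\Psi_{s}=z_{p^{s}}$ satisfy the scaffold conditions, the products $\Psi_{0}^{i_{0}}\cdots\Psi_{n-1}^{i_{n-1}}$ applied to $\lambda_{b}$ produce a $K$-basis of $L$ with prescribed valuations $b+b\sum_{s}i_{s}p^{s}$, and it is this monomial basis of $H$ (equivalently, the action it induces) that is meant by ``the scaffold'' in the theorem's phrasing — consistent with the convention stated in the remark after Lemma \ref{Hscaf} that, with $\left\{\lambda_{j}\right\}$ fixed, the scaffold is referred to simply by its generating set. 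Thus the content of Theorem \ref{scaffy} is exactly the content of Lemma \ref{Hscaf} specialized to the $H=H_{n,r,f}^{\ast}$ action, together with the confirmation that $z_{p^{s}}\left(1_{K}\right)=\varepsilon_{H_{n,r,f}}\left(t^{p^{s}}\right)\cdot(\text{something})$ vanishes — but more directly, $z_{p^{s}}\left(1\right)=z_{p^{s}}\left(t^{0}\right)=\delta_{p^{s},0}=0$ for $s\geq0$, which takes care of the condition $\Psi_{s}\left(1_{K}\right)=0$ in Definition \ref{bcedef}(2).

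I do not anticipate a genuine obstacle here: the theorem is a corollary assembling Lemma \ref{Hscaf}, Proposition \ref{tol}, and the action formulas of Section 4. If anything, the only subtlety worth spelling out is the verification that the chosen action $\alpha_{1}^{x}$ is indeed the one for which Proposition \ref{tol} was proved — this is the normalization fixed in Section 4, so one should note explicitly that we are using that action and that the $K$-algebra generator $x$ of $L$ and generator $t$ of $H_{n,r,f}$ have been fixed. With those conventions recalled, the proof is the two-line inequality $\mathfrak{T}>b\geq1$ followed by the citation of the two preceding results.
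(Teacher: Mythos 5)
Your proposal is correct and follows essentially the same route as the paper: the theorem is stated without a separate proof precisely because it is the combination of Proposition \ref{tol} (which supplies the congruence $z_{p^{s}}\left(x^{i}\right)\equiv i_{s}x^{i-p^{s}}\,\operatorname{mod}x^{i-p^{s}}\mathfrak{P}_{L}^{\mathfrak{T}}$) with Lemma \ref{Hscaf}, plus the observation that the strict inequality $v_{K}\left(f\right)>bp^{r+1-n}$ forces $\mathfrak{T}>b\geq1$, exactly as you compute. Your additional remarks reconciling the phrasing of the theorem with the pair $\left(\left\{\lambda_{j}\right\},\left\{z_{p^{s}}\right\}\right)$ and checking $z_{p^{s}}\left(1\right)=0$ match the paper's own discussion surrounding the theorem.
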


The presentation of the scaffold above follows the form given in Lemma
\ref{Hscaf}. To obtain a scaffold which follows Definition \ref{bcedef}, we
pick an integer $a$ such that $ab\equiv-1$ $\left(  \operatorname{mod}%
p^{n}\right)  $ and set%
\[
\lambda_{j}=T^{\left(  j+b\operatorname*{res}\left(  aj\right)  \right)
/p^{n}}x^{\operatorname*{res}\left(  aj\right)  },\;j\in\mathbb{Z}.
\]
This set, together, with $\left\{  \Psi_{s}=z_{p^{s}}:0\leq s\leq n-1\right\}
$, forms the scaffold on $L$ of tolerance $\mathfrak{T}$ as in the sense of
Definition \ref{bcedef}. In particular,
\begin{align}
\lambda_{b}  &  =T^{\left(  b+b\operatorname*{res}\left(  ab\right)  \right)
/p^{n}}x^{\operatorname*{res}\left(  ab\right)  }\label{scaffold}\\
&  =T^{\left(  b+b\left(  p^{n}-1\right)  \right)  /p^{n}}x^{p^{n}%
-1}\nonumber\\
&  =T^{b}x^{p^{n-1}}.\nonumber
\end{align}

As an immediate consequence, we get:

\begin{corollary}
\label{KbasisL}The set
\[
\left\{  \prod_{s=0}^{n-1}z_{p^{s}}^{j_{s}}\left(  \lambda_{b}\right)  :0\leq
j_{s}\leq p-1\right\}
\]
is a $K$-basis for $L.$
\end{corollary}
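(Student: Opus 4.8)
The plan is to deduce the corollary directly from the scaffold structure of Theorem~\ref{scaffy} together with the general properties of scaffolds recorded immediately after Definition~\ref{bcedef}. First I would invoke Theorem~\ref{scaffy}: since $v_K(f)>bp^{r+1-n}$, the family $\{\Psi_s=z_{p^s}:0\le s\le n-1\}$, together with the elements $\lambda_j=T^{(j+b\operatorname{res}(aj))/p^n}x^{\operatorname{res}(aj)}$, is an $H$-scaffold of tolerance $\mathfrak{T}=p^nv_K(f)-b(p^{r+1}-1)>1$. In particular $\Psi_s(1_K)=0$, and $\Psi_s(\lambda_j)\equiv u_{s,j}\lambda_{j+p^sb}\pmod{\lambda_{j+p^sb}\mathfrak{P}_L^{\mathfrak{T}}}$ with $u_{s,j}\in\mathfrak{O}_K^{\times}$ whenever $\operatorname{res}(aj)_s>0$, while $\Psi_s(\lambda_j)=0$ otherwise; also $\lambda_b=T^bx^{p^{n-1}}$ by \eqref{scaffold}.

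Next I would track the effect of the monomial $\prod_{s=0}^{n-1}z_{p^s}^{j_s}=\Psi_0^{j_0}\cdots\Psi_{n-1}^{j_{n-1}}$ on $\lambda_b$, applying the factors from right to left. Using $ab\equiv-1\pmod{p^n}$ one computes, exactly as in the proof of Lemma~\ref{Hscaf}, that after applying $\Psi_{n-1}^{j_{n-1}}\cdots\Psi_{s+1}^{j_{s+1}}$ one arrives (up to an $\mathfrak{O}_K^{\times}$-factor and an error term in $\lambda_{j'}\mathfrak{P}_L^{\mathfrak{T}}$) at $\lambda_{j'}$ with $j'=b\bigl(1+\sum_{t>s}j_tp^t\bigr)$, and that the $s$-th $p$-adic digit of $\operatorname{res}(aj')$ equals $p-1$; each subsequent application of $\Psi_s$ lowers that digit by one, so it remains positive throughout the next $j_s\le p-1$ applications and the ``nonzero'' branch of the scaffold definition is invoked at every step. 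Because the tolerance satisfies $\mathfrak{T}>1$, the accumulated error terms never reach the valuation of the leading term, so the leading term survives and
\[
v_L\!\left(\prod_{s=0}^{n-1}z_{p^s}^{j_s}(\lambda_b)\right)=b+b\sum_{s=0}^{n-1}j_sp^s,\qquad 0\le j_s\le p-1,
\]
which is the general formula already noted after Definition~\ref{bcedef}.

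It then remains to count dimensions. As $(j_0,\dots,j_{n-1})$ ranges over $\{0,\dots,p-1\}^n$, the sum $\sum_s j_sp^s$ runs through $\{0,1,\dots,p^n-1\}$, and since $p\nmid b$ the integers $b+b\sum_s j_sp^s$ are pairwise incongruent modulo $p^n$. Hence the $p^n$ elements $\prod_s z_{p^s}^{j_s}(\lambda_b)$ have pairwise distinct valuations modulo $p^n=[L:K]$; as $v_L$ takes values in $p^n\mathbb{Z}$ on $K^{\times}$, any nontrivial $K$-linear relation among them would force two of these valuations to coincide modulo $p^n$, so the elements are $K$-linearly independent. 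Being $p^n=\dim_K L$ in number, they form a $K$-basis of $L$.

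I do not expect a genuine obstacle here: the substantive work is already contained in Theorem~\ref{scaffy} and the general scaffold discussion. The only point needing a little care is that the $\Psi_s$ are being iterated on elements that are merely congruent to, rather than equal to, the relevant $\lambda_{j}$; but the hypothesis $\mathfrak{T}>1$ in Theorem~\ref{scaffy} is exactly what guarantees that these error terms stay strictly above the leading valuation, so the valuation computation goes through unchanged.
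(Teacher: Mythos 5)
Your proposal is correct and follows essentially the same route as the paper: the paper's proof simply cites the discussion following Definition~\ref{bcedef}, which contains exactly the valuation formula $v_L\bigl(\prod_s z_{p^s}^{j_s}(\lambda_b)\bigr)=b+b\sum_s j_sp^s$ and the observation that these $p^n$ valuations are pairwise incongruent mod $p^n$, hence give a $K$-basis. You have merely unpacked that discussion (the digit-tracking of $\operatorname{res}(aj')$ and the role of $\mathfrak{T}>1$), which is a faithful elaboration rather than a different argument.
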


\begin{proof}
This follows from the discussion between Definition \ref{bcedef} and Lemma
\ref{Hscaf}. In particular, note that%
\[
\left\{  v_{L}\left(  \prod_{s=0}^{n-1}z_{p^{s}}^{j_{s}}\left(  \lambda
_{b}\right)  \right)  :0\leq s\leq n-1,0\leq j_{s}\leq p-1\right\}
\]
forms a complete set of residues mod $p^{n}.$
\end{proof}

We devote the remainder of this section to showing that the action of $H$ on
$L$ has an ``integer certificate''. In classical Galois module theory, a
number $c\in\mathbb{Z}$ is called an integer certificate if, for all $\rho\in
L$ with $v_{L}\left(  \rho\right)  =c,$ the set $\left\{  \sigma\left(
\rho\right)  :\sigma\in\operatorname*{Gal}\left(  L/K\right)  \right\}  $ is a
$K$-basis for $L.$ We modify that here: a number $c\in\mathbb{Z}$ is an
integer certificate if whenever $v_{L}\left(  \rho\right)  =c$ the set%
\[
\left\{  z_{1}^{j_{0}}z_{p}^{j_{1}}\cdots z_{p^{n-1}}^{j_{n-1}}\left(
\rho\right)  :0\leq j_{s}\leq p-1\right\}
\]
is a $K$-basis for $L.$

As an immediate consequence to Proposition \ref{tol} we get

\begin{corollary}
Let $0\leq s\leq n-1,\;1\leq i\leq p^{n}-1.$ Suppose $z_{p^{s}}\left(
x^{i}\right)  \neq0.$ Then $v_{L}\left(  z_{p^{s}}\left(  x^{i}\right)
\right)  =b\left(  p^{s}-i\right)  =v_{L}\left(  x^{i}\right)  +bp^{s}.$
\end{corollary}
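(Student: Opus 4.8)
The plan is to read off the valuation directly from Proposition \ref{tol}. By that proposition (and, for $s<r$, already by Proposition \ref{action}), whenever $z_{p^s}(x^i)\neq 0$ we have
\[
z_{p^s}(x^i)\equiv i_s\, x^{i-p^s}\pmod{x^{i-p^s}\mathfrak{P}_L^{\mathfrak{T}}},
\]
so $z_{p^s}(x^i)=x^{i-p^s}(i_s+\eta)$ for some $\eta\in\mathfrak{P}_L^{\mathfrak{T}}$. Here $i_s$ is the $s$-th $p$-adic digit of $i$, and the hypothesis $z_{p^s}(x^i)\neq 0$ forces $i_s\neq 0$ (note that $i_s$ runs over $\{0,1,\dots,p-1\}$, so $i_s\neq 0$ in $\mathbb{F}_p$ exactly when it is a unit). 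Hence $i_s+\eta\in\mathfrak{O}_L^\times$ since $\mathfrak{T}>1>0$, and therefore
\[
v_L\bigl(z_{p^s}(x^i)\bigr)=v_L\bigl(x^{i-p^s}\bigr)=(i-p^s)v_L(x)=-b(i-p^s)=b(p^s-i).
\]
Finally, $v_L(x^i)=-bi$, so $b(p^s-i)=-bi+bp^s=v_L(x^i)+bp^s$, which is the second displayed equality.

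There is essentially no obstacle here: the only point requiring a word of care is that "$z_{p^s}(x^i)\neq 0$" genuinely forces the leading digit $i_s$ to be a unit in $\mathfrak{O}_K$ rather than merely allowing cancellation with the error term — but this is immediate, since if $i_s=0$ then Proposition \ref{tol} already gives $z_{p^s}(x^i)\equiv 0$, and in fact (for $s<r$) $z_{p^s}(x^i)=0$ exactly, while for $s\geq r$ the congruence modulo $x^{i-p^s}\mathfrak{P}_L^{\mathfrak{T}}$ together with the explicit expansion in the proof of Proposition \ref{tol} shows the whole expression lies in $x^{i-p^s}\mathfrak{P}_L^{\mathfrak{T}}$, so it cannot have the valuation claimed; thus nonvanishing with the stated valuation is equivalent to $i_s\neq 0$. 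One should also note that $\eta$ has valuation at least $\mathfrak{T}\geq 1 > v_K(i_s) = 0$ (as $i_s$ is a $p$-adic digit, hence a rational integer prime to $p$ when nonzero, hence a unit in $\mathfrak{O}_K\subset\mathfrak{O}_L$), so $v_L(i_s+\eta)=0$ and the valuation of the product is computed term-by-term with no interaction.

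In short, the corollary is just the observation that Proposition \ref{tol} identifies $z_{p^s}(x^i)$ with a unit times $x^{i-p^s}$ whenever it is nonzero, and then a one-line valuation computation using $v_L(x)=-b$.
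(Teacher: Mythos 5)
Your main computation is exactly the intended deduction (the paper offers no argument beyond calling the corollary an immediate consequence of Proposition \ref{tol}): when $i_s\neq 0$, that proposition writes $z_{p^s}(x^i)=x^{i-p^s}(i_s+\eta)$ with $v_L(\eta)\geq\mathfrak{T}>0$ and $i_s$ a unit of $\mathfrak{O}_K$, so $v_L(z_{p^s}(x^i))=(i-p^s)v_L(x)=b(p^s-i)=v_L(x^i)+bp^s$. That part is correct and is the same route the paper intends.

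The gap is at the step you yourself flagged: the implication ``$z_{p^s}(x^i)\neq 0$ implies $i_s\neq 0$'' is not established by your argument, and for $s\geq r$ it is actually false. By Proposition \ref{action}, $z_{p^r}(x^i)=i_rx^{i-p^r}-ifx^{p^r(p-1)+i-1}$; taking $i$ with $i_r=0$ and $p\nmid i$ gives a nonzero value whose valuation is $p^nv_K(f)-b(p^{r+1}-p^r+i-1)$, which exceeds $b(p^r-i)$ by exactly $\mathfrak{T}>0$ under the standing hypothesis on $v_K(f)$. So the hypothesis of the corollary can hold while the conclusion fails. Your clause ``it cannot have the valuation claimed'' concedes precisely this problematic case rather than excluding it, and the ensuing sentence ``nonvanishing with the stated valuation is equivalent to $i_s\neq 0$'' silently replaces the assertion to be proved (nonvanishing implies the stated valuation) with a different, weaker one. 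In fairness, this is a defect of the corollary as literally stated rather than of your calculation: what is true, and what the paper actually uses downstream (note the inequality $\geq$ in the proof of Proposition \ref{valprop} and the remark following it), is that $v_L(z_{p^s}(x^i))\geq v_L(x^i)+bp^s$ always, with equality whenever $i_s\neq 0$; the equality case is only ever invoked for $i=p^n-1$, where every $p$-adic digit equals $p-1$. The clean fix is to prove the statement with the hypothesis $i_s\neq 0$ (or $s<r$), which is exactly what your valuation computation delivers.
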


As each application of $z_{p^{s}}$ increases valuation by $bp^{s}$, the above
result allows us to determine the effect, on valuation, of applying our basis
elements of $H$ to the standard $K$-basis of $L.$

\begin{corollary}
\label{valcor}Let $1\leq i\leq p^{n}-1,$ and let $0\leq j_{s}\leq p-1$ for all
$0\leq s\leq n-1.$ If $z_{1}^{j_{0}}z_{p}^{j_{1}}\dots z_{p^{n-1}}^{j_{n-1}%
}\left(  x^{i}\right)  \neq0$ then
\[
v_{L}\left(  z_{1}^{j_{0}}z_{p}^{j_{1}}\dots z_{p^{n-1}}^{j_{n-1}}\left(
x^{i}\right)  \right)  =v_{L}\left(  x^{i}\right)  +b\sum_{s=0}^{n-1}%
j_{s}p^{s}.
\]
\end{corollary}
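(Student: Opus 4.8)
The plan is to prove \ref{valcor} by induction on the number of $z_{p^s}$-factors applied, reducing everything to the single-application statement in the preceding corollary. The key point is that each $z_{p^s}$, when it does not annihilate its argument, sends an element of valuation $v$ to an element of valuation $v + bp^s$, and this is the \emph{only} thing that can happen on the standard monomials $x^i$: by Proposition \ref{tol} (via the remark following it), $z_{p^s}(x^i) \equiv i_s x^{i-p^s} \pmod{x^{i-p^s}\mathfrak{P}_L^{\mathfrak{T}}}$ with $\mathfrak{T} > 1$, so $z_{p^s}(x^i)$ is either $0$ (when $i_s = 0$) or has the form $x^{i-p^s}(i_s + (\text{higher-order terms}))$, whose valuation is exactly $v_L(x^{i-p^s}) = v_L(x^i) + bp^s$ since $i_s \in \{1,\dots,p-1\}$ is a unit in $\mathbb{F}_q$.

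First I would set up the induction. Order the factors so that we apply them from the innermost outward; write $y_0 = x^i$ and $y_{k} = z_{p^{s}}(y_{k-1})$ for the successive single applications, keeping careful bookkeeping that the total effect of $z_1^{j_0}z_p^{j_1}\cdots z_{p^{n-1}}^{j_{n-1}}$ is obtained by applying $z_1$ a total of $j_0$ times, then $z_p$ a total of $j_1$ times, etc. At each intermediate stage $y_{k-1}$ is a $K^\times$-multiple of a single power $x^{i_{k-1}}$ plus a term of strictly larger valuation lying in $x^{i_{k-1}}\mathfrak{P}_L^{\mathfrak{T}}$; applying $z_{p^s}$ to such an expression, using $K$-linearity of the action and Proposition \ref{tol} applied to the leading monomial (together with the observation that $z_{p^s}$ can only raise valuation, so the error term stays an error term), shows $y_k$ again has this shape with leading power $x^{i_{k-1}-p^s}$, \emph{provided} $y_k \neq 0$. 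Since we are given the final output is nonzero, every intermediate $y_k$ is nonzero, so at each step the valuation increases by exactly $bp^s$. Summing these increments over all the applied factors gives $v_L(x^i) + \sum_{s=0}^{n-1} j_s\,b p^s$, which is the claimed formula.

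The one point requiring a little care — and the closest thing to an obstacle — is verifying that the ``error term'' genuinely remains negligible across iterated applications: after applying $z_{p^s}$ to $x^{i'}(1 + \epsilon)$ with $v_L(\epsilon) \geq \mathfrak{T}$, the new element is $i'_s x^{i'-p^s}(1 + \epsilon')$ where $\epsilon'$ absorbs both the correction from Proposition \ref{tol} and the action of $z_{p^s}$ on $x^{i'}\epsilon$; one must check $v_L(\epsilon') \geq \mathfrak{T}$ still holds. This follows because $z_{p^s}$ raises valuation by $bp^s > 0$ on any monomial on which it is nonzero, and $\mathfrak{T}$ does not change, so the error valuation is non-decreasing under the iteration. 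Since we only need the valuation of the \emph{leading} term — the whole point is that the unit coefficient $i'_s$ never vanishes along the way because nonvanishing of the output forces nonvanishing at every stage — the bookkeeping is routine and the corollary follows immediately. Indeed this is exactly the observation already made in the discussion surrounding Lemma \ref{Hscaf}, now applied to the standard basis $\{x^i\}$ rather than to $\{\lambda_j\}$.
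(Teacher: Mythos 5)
Your overall strategy --- iterate the single-application statement, writing each intermediate as a leading monomial times $(1+\epsilon)$ with $v_L(\epsilon)\geq\mathfrak{T}$ and checking that the error propagates harmlessly --- is exactly what the paper intends (it offers no argument beyond ``each application of $z_{p^{s}}$ increases valuation by $bp^{s}$,'' so your write-up is the natural fleshing-out of the same approach). However, there is a genuine gap at the step where you assert that the unit coefficient $i'_{s}$ ``never vanishes along the way because nonvanishing of the output forces nonvanishing at every stage.'' Nonvanishing of the final output does force each intermediate $y_{k}$ to be nonzero, but it does \emph{not} force the leading digit $i'_{s}$ to be nonzero: for $s\geq r$ the operator $z_{p^{s}}$ can return a nonzero element consisting entirely of the ``deformation'' terms. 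Concretely, Proposition \ref{action} gives $z_{p^{r}}\left(  x\right)  =-fx^{p^{r}\left(  p-1\right)  }\neq0$ even though the $p^{r}$-digit of $1$ is $0$, and
\[
v_{L}\left(  z_{p^{r}}\left(  x\right)  \right)  =p^{n}v_{K}\left(  f\right)
-bp^{r}\left(  p-1\right)  =v_{L}\left(  x\right)  +bp^{r}+\mathfrak{T},
\]
which differs from $v_{L}\left(  x\right)  +bp^{r}$ since $\mathfrak{T}>1$. So your inductive shape $y_{k}=(\text{unit})\,x^{i'}\left(  1+\epsilon\right)$ can break at an intermediate stage, and in fact the corollary as stated already fails in a single step (take $i=1$, $j_{r}=1$, all other $j_{s}=0$). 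The implication ``output nonzero $\Rightarrow$ leading coefficient nonzero at every stage'' is the crux, and it is false.

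What does survive of your bookkeeping is the inequality $v_{L}\left(  z_{1}^{j_{0}}\cdots z_{p^{n-1}}^{j_{n-1}}\left(  x^{i}\right)  \right)  \geq v_{L}\left(  x^{i}\right)  +b\sum_{s}j_{s}p^{s}$, since by Proposition \ref{tol} every term of $z_{p^{s}}\left(  x^{i'}\right)$ lies in $x^{i'-p^{s}}\mathfrak{O}_{L}$; and that inequality is all that Proposition \ref{valprop} actually uses for the terms $x^{p^{n}-1-\ell}$ with $\ell\geq1$. The stated equality, and your induction verbatim, are correct under the additional hypothesis $j_{s}\leq i_{s}$ for all $s$: then every leading digit encountered is at least $1$ (each application of $z_{p^{s}}$ lowers the $s$-digit by exactly one, with no borrowing), the unit coefficient never vanishes, and the leading term strictly dominates the accumulated error because $\mathfrak{T}>0$ is preserved at each step, as you correctly argue. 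That hypothesis holds in the one place the equality is genuinely needed, namely $i=p^{n}-1$, where $i_{s}=p-1$ for all $s$. You should either impose $j_{s}\leq i_{s}$ or weaken the conclusion to ``$\geq$''; as written, the key nonvanishing claim is a non sequitur.
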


To set some notation, given $0\leq j\leq p^{n}-1,$ we define $0\leq
j_{0},\dots,j_{n-1}\leq p-1$ to be the unique integers such that%
\[
j=\sum_{s=0}^{n-1}j_{s}p^{s}.
\]
Conversely, given a collection $\left\{  j_{0},\dots,j_{n-1}\right\}  $ with
$0\leq j_{s}\leq p-1$ for all $0\leq s\leq n-1$ we define $j$ using the
summation above.

We claim that if $v_{L}\left(  \rho\right)  =b$ then
\[
\left\{  z_{1}^{j_{0}}z_{p^{2}}^{j_{1}}\dots z_{p^{n-1}}^{j_{n-1}}\left(
\rho\right)  :0\leq j_{\ell}\leq p-1\right\}
\]
forms a basis for $L/K.$ The crucial step to establishing this is the following.

\begin{proposition}
\label{valprop}Pick $\rho\in L$ with $v_{L}\left(  \rho\right)  =b.\;$Then
\[
v_{L}\left(  z_{1}^{j_{0}}z_{p}^{j_{1}}\dots z_{p^{n-1}}^{j_{n-1}}\left(
\rho\right)  \right)  =b\left(  1+j\right)  .
\]
\end{proposition}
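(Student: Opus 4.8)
The plan is to write $\rho$ in terms of the standard $K$-basis of $L$ and reduce the claim to a statement about valuations of sums of the elements $z_1^{j_0}\cdots z_{p^{n-1}}^{j_{n-1}}(x^i)$, which we control by Corollary \ref{valcor}. Concretely, since $v_L(\rho)=b$ and $v_L(x^i)=-bi$ for $0\le i\le p^n-1$, we may write $\rho = \sum_{i=0}^{p^n-1} c_i T^{m_i} x^i$ for suitable $c_i\in\mathbb{F}_q$ and integers $m_i$; the condition $v_L(\rho)=b$ forces the term with $v_L(T^{m_i}x^i)=b$ to appear with a unit coefficient, and we may as well arrange (by scaling) that the dominant term is exactly $\lambda_b = T^b x^{p^{n-1}}$ from eq.~(\ref{scaffold}), i.e.\ $\rho = \lambda_b + (\text{higher valuation})$. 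The operators $z_{p^s}$ are $K$-linear, so $z_1^{j_0}\cdots z_{p^{n-1}}^{j_{n-1}}(\rho) = z_1^{j_0}\cdots z_{p^{n-1}}^{j_{n-1}}(\lambda_b) + z_1^{j_0}\cdots z_{p^{n-1}}^{j_{n-1}}(\eta)$ where $v_L(\eta)>b$.

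The key computation is the valuation of the leading term. Writing $j=\sum_s j_s p^s$ and applying Corollary \ref{valcor} to $x^{p^{n-1}}$ (note $\lambda_b$ differs from $x^{p^{n-1}}$ only by the scalar $T^b\in K^\times$, so valuations of $z$-images shift uniformly by $b$), one gets $v_L\big(z_1^{j_0}\cdots z_{p^{n-1}}^{j_{n-1}}(\lambda_b)\big) = v_L(\lambda_b) + b\sum_s j_s p^s = b + bj = b(1+j)$, provided this image is nonzero. Nonvanishing follows from the scaffold structure: as noted in the discussion after Definition \ref{bcedef}, $\operatorname{res}(a(b+p^sbi))_s = p-i>0$ ensures each successive application of $z_{p^s}$ to the relevant $\lambda$ lands on a nonzero unit multiple of the next $\lambda$, so the whole composite is a unit times $\lambda_{b+bj}$, which has valuation $b(1+j)$. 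Then I must check that the ``error term'' $z_1^{j_0}\cdots z_{p^{n-1}}^{j_{n-1}}(\eta)$ has strictly larger valuation than $b(1+j)$: since $\eta$ is a $K$-combination of monomials $T^{m}x^{i'}$ with $v_L(T^m x^{i'})>b$, Corollary \ref{valcor} (applied termwise, with the harmless scalar shift) gives that each such term, if nonzero after applying the operators, has valuation $>b + b\sum_s j_s p^s = b(1+j)$; hence the sum has valuation $>b(1+j)$, and the leading term dominates.

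The main obstacle is controlling the error term carefully: one needs that applying $z_1^{j_0}\cdots z_{p^{n-1}}^{j_{n-1}}$ cannot \emph{lower} valuation, and that the strict inequality $v_L>b$ on $\eta$ is preserved. This is where Proposition \ref{tol} (equivalently Corollary \ref{valcor}) does the real work: each $z_{p^s}$ either kills a monomial or raises its valuation by exactly $bp^s$, up to an element of $\lambda\cdot\mathfrak{P}_L^{\mathfrak{T}}$ with $\mathfrak{T}>1$; so the valuation of any image of a valuation-$(>b)$ element stays $>b+b\sum_s j_s p^s$, and in particular strictly exceeds the valuation of the leading term. A minor subtlety worth spelling out is that the $\mathfrak{T}>1$ hypothesis (the standing assumption $v_K(f)>bp^{r+1-n}$, Theorem \ref{scaffy}) is exactly what guarantees the congruence error in Proposition \ref{tol} does not interfere — the correction terms lie in valuation at least $\mathfrak{T}$ above the main term, hence strictly above. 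Assembling: $v_L(z_1^{j_0}\cdots z_{p^{n-1}}^{j_{n-1}}(\rho)) = \min\{b(1+j),\ >b(1+j)\} = b(1+j)$, as claimed.
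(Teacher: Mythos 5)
Your proof is correct and follows essentially the same route as the paper: split $\rho$ into the dominant term $T^b x^{p^n-1}=\lambda_b$ plus a tail of strictly larger valuation, use Corollary \ref{valcor} together with the fact that all $p$-adic digits of $p^n-1$ equal $p-1$ (equivalently, the scaffold relations at $\lambda_b$) to get that the leading term's image is nonzero of valuation exactly $b(1+j)$, and check termwise that the tail's image has strictly larger valuation so the minimum is uniquely attained. No substantive differences from the paper's argument.
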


\begin{proof}
Any $\rho\in L$ with $v_{L}\left(  \rho\right)  =b$ has the form
\[
\rho=g\left(  x^{-1}+\sum_{\ell=1}^{p^{n}}a_{\ell}x^{-1-\ell}\right)
\]
with $g\in K,\;a_{\ell}\in K,$ $v_{K}\left(  g\right)  =0,\ $and $v_{L}\left(
a_{\ell}\right)  >-b\ell$ for all $1\leq\ell\leq p^{n}.$ Let us write
$g=g_{0}T^{b}x^{p^{n}},$ and for simplicity we assume $g_{0}=1.$ Then%
\[
\rho=T^{b}x^{p^{n}-1}+T^{b}\sum_{\ell=1}^{p^{n}}a_{\ell}x^{p^{n}-1-\ell}%
\]
(note that $T^{b}x^{p^{n}-1}$ is the element $\lambda_{b}$ from eq.
\ref{scaffold}, and thus is part of the scaffold in the Definition
\ref{bcedef} sense) and%
\[
z_{1}^{j_{0}}z_{p}^{j_{1}}\dots z_{p^{n-1}}^{j_{n-1}}\left(  \rho\right)
=T^{b}z_{1}^{j_{0}}z_{p}^{j_{1}}\dots z_{p^{n-1}}^{j_{n-1}}\left(  x^{p^{n}%
-1}\right)  +T^{b}\sum_{\ell=1}^{p^{n}}a_{\ell}z_{1}^{j_{0}}z_{p}^{j_{1}}\dots
z_{p^{n-1}}^{j_{n-1}}\left(  x^{p^{n}-1-\ell}\right)  .
\]
Applying Corollary \ref{valcor} to the case where $i=p^{n}-1-\ell,$ either
$z_{1}^{j_{0}}z_{p}^{j_{1}}\dots z_{p^{n-1}}^{j_{n-1}}\left(  x^{p^{n}-1-\ell
}\right)  =0$ or
\[
v_{L}\left(  z_{1}^{j_{0}}z_{p}^{j_{1}}\dots z_{p^{n-1}}^{j_{n-1}}\left(
x^{p^{n}-1-\ell}\right)  \right)  =-b\left(  p^{n}-1-\ell\right)  +bj.
\]
Furthermore, observe that%
\[
z_{1}^{j_{0}}z_{p^{2}}^{j_{1}}\dots z_{p^{n-1}}^{j_{n-1}}\left(  x^{p^{n}%
-1}\right)  \neq0,\;0\leq j_{\ell}\leq p-1
\]
since $p^{n}-1=\left(  p-1\right)  +\left(  p-1\right)  p+\cdots+\left(
p-1\right)  p^{n-1}.$ Thus,
\begin{align*}
v_{L}\left(  T^{b}z_{1}^{j_{0}}z_{p}^{j_{1}}\dots z_{p^{n-1}}^{j_{n-1}}\left(
x^{p^{n}-1}\right)  \right)   &  =bp^{n}-b\left(  p^{n}-1\right)  +bj\\
&  =b\left(  1+j\right)
\end{align*}
since
\[
v_{L}\left(  T^{b}a_{\ell}z_{1}^{j_{0}}z_{p}^{j_{1}}\dots z_{p^{n-1}}%
^{j_{n-1}}\left(  x^{p^{n}-1-\ell}\right)  \right)  \geq p^{n}b+v_{L}\left(
a_{\ell}\right)  -b\left(  p^{n}-1-\ell\right)  +bj
\]
and, since $v_{L}\left(  a_{\ell}\right)  >-b\ell,$%
\begin{align*}
p^{n}b+v_{L}\left(  a_{\ell}\right)  -b\left(  p^{n}-1-\ell\right)  +bj  &
=p^{n}b+v_{L}\left(  a_{\ell}\right)  -bp^{n}+b+b\ell+bj\\
&  =v_{L}\left(  a_{\ell}\right)  +b\ell+b\left(  1+j\right) \\
&  >b\left(  1+j\right)  =v_{L}\left(  T^{b}z_{1}^{j_{0}}z_{p}^{j_{1}}\dots
z_{p^{n-1}}^{j_{n-1}}\left(  x^{p^{n}-1}\right)  \right)  ,
\end{align*}
hence
\[
v_{L}\left(  z_{1}^{j_{0}}z_{p}^{j_{1}}\dots z_{p^{n-1}}^{j_{n-1}}\left(
\rho\right)  \right)  =\min\left\{  b\left(  1+j\right)  ,v_{L}\left(
a_{\ell}\right)  +b\ell+b\left(  1+j\right)  \right\}  =b\left(  1+j\right)
\]
since the minimum is uniquely achieved.
\end{proof}

\begin{remark}
Generally, it is not the case that if $z_{p^{s}}\left(  y\right)  \neq0$ then
$v_{L}\left(  z_{p^{s}}\left(  y\right)  \right)  =v_{L}\left(  y\right)
+bp^{s},$ i.e., that an application of $z_{p^{s}}$ universally increases
valuation by $bp^{s}.$For example, $v_{L}\left(  x^{p-1}+Tx^{p}\right)
=-\left(  p-1\right)  b$ but $v_{L}\left(  z_{p}\left(  x^{p-1}+Tx^{p}\right)
\right)  =v_{L}\left(  T\right)  =p^{n}.$ However, it is always true that
$v_{L}\left(  z_{p^{s}}\left(  y\right)  \right)  \geq v_{L}\left(  y\right)
+bp^{s}.$
\end{remark}

\begin{corollary}
The set $\left\{  z_{1}^{j_{0}}z_{p}^{j_{1}}\cdots z_{p^{n-1}}^{j_{n-1}%
}\left(  \rho\right)  :0\leq j_{s}\leq p-1\right\}  $ forms a $K$-basis for
$L,$ i.e., $b$ is an integer certificate.
\end{corollary}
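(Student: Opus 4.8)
The plan is to deduce this directly from Proposition \ref{valprop}, which already does essentially all the work. First I would observe that the indexing set $\{(j_0,\dots,j_{n-1}):0\le j_s\le p-1\}$ has exactly $p^n=[L:K]$ elements, so it suffices to show that the corresponding elements $z_1^{j_0}z_p^{j_1}\cdots z_{p^{n-1}}^{j_{n-1}}(\rho)$ are linearly independent over $K$.

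Next I would invoke Proposition \ref{valprop}: for each tuple $(j_0,\dots,j_{n-1})$, setting $j=\sum_{s=0}^{n-1}j_s p^s$ as in the notation fixed above, we have $v_L(z_1^{j_0}z_p^{j_1}\cdots z_{p^{n-1}}^{j_{n-1}}(\rho))=b(1+j)$; in particular each such element is nonzero. As $j$ ranges over $0,1,\dots,p^n-1$, the integers $b(1+j)$ run over a complete set of residues modulo $p^n$, since $p\nmid b$ forces multiplication by $b$ to be a bijection on $\mathbb{Z}/p^n\mathbb{Z}$. Hence the $p^n$ elements in question have pairwise distinct valuations modulo $p^n$.

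The final step is the standard observation that elements of $L$ whose $v_L$-values are pairwise incongruent modulo $p^n$ are automatically $K$-linearly independent: in any nontrivial relation $\sum_k c_k \xi_k=0$ with $c_k\in K^\times$, the valuations $v_L(c_k\xi_k)=p^n v_K(c_k)+v_L(\xi_k)$ are still pairwise distinct (they lie in distinct residue classes mod $p^n$), so the unique term of minimal valuation cannot be cancelled, a contradiction. Therefore our $p^n$ elements form a $K$-basis of $L$, which is exactly the assertion that $b$ is an integer certificate. There is no real obstacle here — the proposition supplies the valuations, and the only thing to check carefully is the elementary congruence bookkeeping that makes $\{b(1+j)\}$ a full residue system mod $p^n$, together with the routine linear-independence argument just sketched.
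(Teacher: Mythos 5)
Your proposal is correct and follows the paper's own argument essentially verbatim: it invokes Proposition \ref{valprop} to get the valuations $b(1+j)$, notes that these form a complete residue system mod $p^{n}$ because $p\nmid b$, and concludes linear independence (and hence a basis, by counting). The only difference is that you spell out the standard valuation-theoretic linear-independence step explicitly, which the paper leaves implicit.
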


\begin{proof}
Observe that%
\[
\left\{  v_{L}\left(  z_{1}^{j_{0}}z_{p^{2}}^{j_{1}}\dots z_{p^{n-1}}%
^{j_{n-1}}\left(  \rho\right)  \right)  :0\leq j_{s}\leq p-1\right\}
=\left\{  b\left(  1+j\right)  :0\leq j\leq p^{n}-1\right\}  .
\]
Now $\left\{  b\left(  1+j\right)  :0\leq j\leq p^{n}-1\right\}  \;$ is a
complete set of residues $\operatorname{mod}p^{n}$ since $p\nmid b.$ Thus,
$\left\{  z_{1}^{j_{0}}z_{p}^{j_{1}}\dots z_{p^{n-1}}^{j_{n-1}}\left(
\rho\right)  :0\leq j_{s}\leq p-1\right\}  $ is $K$-linearly independent, and
hence a basis for $L.$
\end{proof}

\section{Integral Hopf Galois Module Structure}

In this section we describe the Hopf Galois module structure of $\mathfrak{O}%
_{L}$ and of all of the fractional ideals $\mathfrak{P}_{L}^{h}$ of $L.$ Given
a high enough tolerance level, the results of \cite{ByottChildsElder14} enable
us to describe the $H$-module structure of $\mathfrak{P}_{L}^{h}$ \ We apply
their work below, and then we will take a look at a specific action of $H$ on
$L$.

Let $h\in\mathbb{Z}.$ Since $\mathfrak{P}_{L}^{h+p^{n}}=T\mathfrak{P}_{L}^{h}$
and $\mathfrak{A}_{h+p^{n}}=\mathfrak{A}_{h}$ it suffices to consider the Hopf
Galois module structure on a complete set of residues mod $p^{n}.$ We will
pick the set of residues $h$ such that $0\leq b-h\leq p^{n}-1.$

We start with:

\begin{lemma}
There exists actions of $H$ on $L$ which produce $H$-scaffold structures on
$L/K$ with arbitrarily high tolerance.
\end{lemma}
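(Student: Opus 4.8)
The plan is to leverage Theorem~\ref{scaffy} directly, which already produces an $H$-scaffold of tolerance $\mathfrak{T} = p^n v_K(f) - b(p^{r+1}-1)$ whenever $v_K(f) > bp^{r+1-n}$. Since $H_{n,r,f} = H_{n,r,T^{p^{r+1}-1}f}$, replacing $f$ by $T^{k(p^{r+1}-1)}f$ for a positive integer $k$ gives \emph{the same Hopf algebra $H$} but a genuinely different coaction on $L$ (hence a different action of $H$ on $L$), and it raises the valuation: $v_K(T^{k(p^{r+1}-1)}f) = v_K(f) + k(p^{r+1}-1)$.

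First I would observe that for this new action the tolerance becomes
\[
\mathfrak{T}_k = p^n\bigl(v_K(f) + k(p^{r+1}-1)\bigr) - b(p^{r+1}-1) = \mathfrak{T} + kp^n(p^{r+1}-1),
\]
by Theorem~\ref{scaffy} applied to $f' = T^{k(p^{r+1}-1)}f$. Second, I would note that $p^{r+1}-1 \geq 1$ (since $r \geq 1$, as $0 < r < n$), so $p^n(p^{r+1}-1) \geq 1$, and therefore $\mathfrak{T}_k \to \infty$ as $k \to \infty$. Thus, given any bound $N$, choosing $k$ large enough yields an action of $H$ on $L$ with an $H$-scaffold of tolerance exceeding $N$.

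There is essentially no obstacle here: the lemma is a restatement of the observation (already made in the prose following Theorem~\ref{scaffy}) that ``there will be an infinite number of actions of $H$ on $L$ which produce scaffolds'', now sharpened to track how the tolerance grows. The only point requiring a sentence of care is confirming that distinct choices of $k$ really do give distinct (and valid) actions — this is immediate from the discussion in Section~4, where $\alpha_h^y$ with varying $h \in K^\times$ is shown to give a family of coactions all making $L$ an $H_{n,r,f}$-Galois object, with $T^{k(p^{r+1}-1)}f$ corresponding to a rescaling of the $K$-algebra generator $t$ of $H_{n,r,f}$. So the proof is a two-line invocation of Theorem~\ref{scaffy} with the substitution $f \mapsto T^{k(p^{r+1}-1)}f$ and the elementary estimate above.
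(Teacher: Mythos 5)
Your proposal is correct and follows essentially the same route as the paper, which simply observes that $H$ can be written as $H_{n,r,f}^{\ast}$ with $v_{K}(f)$ arbitrarily large and then cites the tolerance formula $\mathfrak{T}=p^{n}v_{K}(f)-b(p^{r+1}-1)$. Your version merely makes explicit the substitution $f\mapsto T^{k(p^{r+1}-1)}f$ that the paper leaves implicit.
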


\begin{proof}
As we can write $H=H_{n,r,f}^{\ast}$ with $v_{K}\left(  f\right)  $ of
arbitrarily high valuation, this is clear since $\mathfrak{T}=p^{n}%
v_{K}\left(  f\right)  -b\left(  p^{r+1}-1\right)  $ for $v_{K}\left(
f\right)  \geq bp^{r+1-n}.$
\end{proof}

For the remainder of this section, pick $f$ such that
\[
v_{K}\left(  f\right)  \geq\frac{2p^{n}-1+b\left(  p^{r+1}-1\right)  }{p^{n}},
\]
so $\mathfrak{T}\geq2p^{n}-1.$ This level of tolerance allows us to determine
integral Hopf Galois module structure. 

\begin{remark}
This new bound on $v_{K}\left(  f\right)  $ is larger than the one we imposed
in section 5. While we could have simply assumed $v_{K}\left(  f\right)
\geq\left(  2p^{n}-1+b\left(  p^{r+1}-1\right)  \right)  p^{-n}$ throughout,
we wanted to also provide examples of $H$-scaffolds for which Hopf Galois
module structure could not be completely determined.
\end{remark}

We will now introduce numerical data from \cite{ByottChildsElder14}. For each
$0\leq j\leq p^{n}-1,$ let%
\begin{align*}
d_{h}\left(  j\right)   &  =\left\lfloor \frac{bj+b-h}{p^{n}}\right\rfloor \\
w_{h}\left(  j\right)   &  =\min\left\{  d_{h}\left(  i+j\right)
-d_{h}\left(  i\right)  :0\leq i\leq p^{n}-1,\;i_{s}+j_{s}\leq p-1\text{ for
all }s\right\}  ,
\end{align*}
using our convention that $j=\sum j_{s}p^{s},\;i=\sum i_{s}p^{s}$ as before.
Then, using Theorem 3.1, Theorem 3.7, and Corollary 3.2 of
\cite{ByottChildsElder14} we get all of the following.

\begin{proposition}
\label{bigbce}With the notation as above:

\begin{enumerate}
\item $\mathfrak{A}_{h}$ has $\mathfrak{O}_{K}$-basis $\left\{  T^{-w_{h}%
\left(  j\right)  }z_{1}^{j_{0}}z_{p}^{j_{1}}\cdots z_{p^{n-1}}^{j_{n-1}%
}:0\leq j\leq p^{n}-1\right\}  .$

\item $\mathfrak{O}_{K}$ is a free $\mathfrak{A}$-module of rank one --
explicitly, $\mathfrak{O}_{L}=\mathfrak{A}\cdot\rho,\;v_{L}\left(
\rho\right)  =b$ -- if $\operatorname*{res}\left(  b\right)  \mid\left(
p^{m}-1\right)  $ for some $1\leq m\leq n.$

\item $\mathfrak{P}_{L}^{h}$ is a free $\mathfrak{A}_{h}$-module if and only
if $w_{h}\left(  j\right)  =d_{h}\left(  j\right)  $ for all $0\leq j\leq
p^{n}-1;$ furthermore if this equality holds then $\mathfrak{P}_{L}%
^{h}=\mathfrak{A}_{h}\cdot\rho,\;v_{L}\left(  \rho\right)  =b.$

\item If $w_{h}\left(  j\right)  \neq d_{h}\left(  j\right)  ,$ then
$\mathfrak{P}_{L}^{h}$ can be generated over $\mathfrak{A}_{h}$ using $\ell$
generators, where $\ell=\,^{\#}\left\{  i:d_{h}\left(  i\right)  >d_{h}\left(
i-j\right)  +w_{h}\left(  j\right)  \text{ for all }0\leq j\leq p^{n-1}%
\,\;\text{with\ }j_{s}\leq i_{s}\right\}  $
\end{enumerate}
\end{proposition}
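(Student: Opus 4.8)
The plan is to recognize Proposition \ref{bigbce} not as something to be proved from first principles, but as a direct translation of \cite[Th.\ 3.1, Th.\ 3.7, Cor.\ 3.2]{ByottChildsElder14} into the language and notation of the present paper. The key point established in Section 5 (Theorem \ref{scaffy}) is that $\{z_1^{j_0}z_p^{j_1}\cdots z_{p^{n-1}}^{j_{n-1}}\}$ is an $H$-scaffold on $L$ in the sense of Definition \ref{bcedef}, and by the choice of $f$ in this section the tolerance satisfies $\mathfrak{T}\geq 2p^n-1$, which is the hypothesis needed to invoke the integral module-theoretic results of \cite{ByottChildsElder14}. So the proof is essentially: verify that our hypotheses meet theirs, match up notation, and read off the four conclusions.

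First I would record the notational dictionary. In \cite{ByottChildsElder14} the scaffold data consist of the $\Psi_s$ and the $\lambda_j$; here $\Psi_s=z_{p^s}$ and $\lambda_j=T^{(j+b\operatorname*{res}(aj))/p^n}x^{\operatorname*{res}(aj)}$ as in Lemma \ref{Hscaf} and the display \eqref{scaffold}. The integer $b$ plays the role of their ramification-type parameter (here $p\nmid b$, so their coprimality hypothesis holds), and $a$ is the chosen inverse with $ab\equiv-1\pmod{p^n}$. The functions $d_h(j)=\lfloor (bj+b-h)/p^n\rfloor$ and $w_h(j)=\min\{d_h(i+j)-d_h(i):0\le i\le p^n-1,\ i_s+j_s\le p-1\ \forall s\}$ are exactly the quantities denoted (up to indexing) by the corresponding symbols in \cite{ByottChildsElder14}, built from the valuations $v_L(\lambda_{b+bj})=b(1+j)$ that the scaffold produces. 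Once this dictionary is in place, part (1) is their description of the associated order $\mathfrak{A}_h$ as the $\mathfrak{O}_K$-span of the normalized scaffold monomials $T^{-w_h(j)}\prod_s z_{p^s}^{j_s}$; part (3) is their freeness criterion $w_h(j)=d_h(j)$ for all $j$ together with the identification of the free generator as any $\rho$ with $v_L(\rho)=b$; part (4) is their formula for the minimal number of generators when freeness fails; and part (2) is the specialization of part (3) to $h=b$ (so $d_b(j)=\lfloor bj/p^n\rfloor$), combined with Corollary 3.2 of \cite{ByottChildsElder14}, which gives the clean sufficient condition $\operatorname*{res}(b)\mid(p^m-1)$ for $\mathfrak{O}_L$ to be $\mathfrak{A}$-free.

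The one genuine verification to carry out — and the step I expect to require the most care — is checking the hypothesis under which the cited theorems apply, namely that the tolerance is large enough relative to the "depth" of the valuation data. In \cite{ByottChildsElder14} the integral results require $\mathfrak{T}$ to exceed a bound of roughly $2p^n$ (more precisely, $\mathfrak{T}>2p^n-1$ or the analogous sharp inequality in their statement), and this is precisely why this section reimposed $v_K(f)\ge (2p^n-1+b(p^{r+1}-1))/p^n$, forcing $\mathfrak{T}=p^nv_K(f)-b(p^{r+1}-1)\ge 2p^n-1$. I would spell out this arithmetic, confirm it matches the exact inequality demanded in \cite{ByottChildsElder14}, and note that Proposition \ref{valprop} (equivalently Corollary \ref{valcor}) supplies the valuation behavior $v_L\big(\prod_s z_{p^s}^{j_s}(\rho)\big)=b(1+j)$ on which the $d_h$ and $w_h$ computations rest. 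With the hypotheses confirmed, parts (1)--(4) follow verbatim from the three cited results, so the proof reduces to: "This is immediate from \cite[Th.\ 3.1, Th.\ 3.7, Cor.\ 3.2]{ByottChildsElder14} applied to the scaffold of Theorem \ref{scaffy}, whose tolerance $\mathfrak{T}\ge 2p^n-1$ meets the hypotheses there, with the notational identifications above."

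\begin{proof}
By Theorem \ref{scaffy} and the choice of $f$ in this section, $\{z_1^{j_0}z_p^{j_1}\cdots z_{p^{n-1}}^{j_{n-1}}:0\le j_s\le p-1\}$, together with the $\lambda_j$ of \eqref{scaffold}, is an $H$-scaffold on $L$ in the sense of Definition \ref{bcedef} of tolerance $\mathfrak{T}=p^n v_K(f)-b(p^{r+1}-1)\ge 2p^n-1$. Since $p\nmid b$, the hypotheses of \cite[Th.\ 3.1, Th.\ 3.7, Cor.\ 3.2]{ByottChildsElder14} are satisfied: the coprimality condition on the ramification parameter holds, and the tolerance bound $\mathfrak{T}\ge 2p^n-1$ is exactly the one required there for the integral statements. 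Under the notational identifications $\Psi_s=z_{p^s}$, and with $d_h(j)$, $w_h(j)$ as defined above (which coincide with the corresponding quantities of \cite{ByottChildsElder14} built from the scaffold valuations $v_L(\lambda_{b+bj})=b(1+j)$, cf.\ Proposition \ref{valprop}), the four assertions are precisely \cite[Th.\ 3.1]{ByottChildsElder14} (part (1)), \cite[Th.\ 3.7]{ByottChildsElder14} (parts (3) and (4)), and \cite[Cor.\ 3.2]{ByottChildsElder14} specialized to $h=b$ (part (2)). In part (2) note that $\operatorname*{res}(b)$ is the least nonnegative residue of $b$ modulo $p^n$, and the free generator $\rho$ may be taken to be any element with $v_L(\rho)=b$, as in Proposition \ref{valprop}.
\end{proof}
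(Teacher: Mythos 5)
Your proposal is correct and matches the paper's own treatment: the paper offers no proof beyond the sentence "using Theorem 3.1, Theorem 3.7, and Corollary 3.2 of \cite{ByottChildsElder14} we get all of the following," having already secured the scaffold of tolerance $\mathfrak{T}\geq 2p^{n}-1$ via the choice of $f$. Your added care in spelling out the notational dictionary and verifying the tolerance hypothesis is a reasonable elaboration of exactly the same argument.
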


\begin{remark}
It is important to note that the determination as to whether $\mathfrak{P}%
_{L}^{h}$ is free over $\mathfrak{A}_{h}$ does not depend on the $H$-scaffold
itself, merely on the behavior of $d_{h}$ and $w_{h}.$
\end{remark}

\begin{remark}
Note that if $\operatorname*{res}\left(  b\right)  \mid\left(  p^{m}-1\right)
$ then $\mathfrak{O}_{K}$ is free over $\mathfrak{A},$ but in general the
converse does not hold. But since \textbf{2} is a special case of \textbf{3}
where $h=0$ we do have necessary and sufficient conditions for when
$\mathfrak{O}_{K}$ is free over $\mathfrak{A.}$
\end{remark}

Let us interpret these results in the case where $b=1,$ which requires that
$v_{K}\left(  f\right)  \geq3.$ (Note that scaffolds exist for $v_{K}\left(
f\right)  =2,$ as well as for $v_{K}\left(  f\right)  =1$ unless $n=r+1$.)
Then $2-p^{n}\leq h\leq1$ and%
\[
d_{h}\left(  j\right)  =\left\lfloor \frac{j+1-h}{p^{n}}\right\rfloor
=\left\{
\begin{array}
[c]{cc}%
1 & j\geq p^{n}-1+h\\
0 & j<p^{n}-1+h
\end{array}
\right.  .
\]
Since $w_{h}\left(  j\right)  \leq d_{h}\left(  j\right)  ,$ which is readily
seen by setting $i=0$ in the definition of $w_{h}\left(  j\right)  ,$ the
statement $w_{h}\left(  j\right)  =d_{h}\left(  j\right)  $ for all $0\leq
j\leq p^{n}-1$ is true if and only if $w_{h}\left(  j\right)  =1$ whenever
$j\geq p^{n}-1+h.$ Suppose $h>\left(  1-p^{n}\right)  /2$ and $d_{h}\left(
j\right)  =1.$ Then $j>p^{n}-1+\left(  1-p^{n}\right)  /2=\left(
p^{n}-1\right)  /2.$ Now assume there exists an $i$ such that $d_{h}\left(
i+j\right)  -d_{h}\left(  i\right)  =0$ and $i_{s}+j_{s}\leq p-1$ for all $s$.
Then $d_{h}\left(  i+j\right)  \geq d_{h}\left(  j\right)  =1$ so
$d_{h}\left(  i\right)  =1$ as well. Thus $i>\left(  p^{n}-1\right)  /2$. But
then $i+j\geq p^{n},$ contradicting the fact that $i_{s}+j_{s}\leq p-1$ for
all $s$. Therefore, no such $i$ can occur, hence $w_{h}\left(  j\right)
=d_{h}\left(  j\right)  $ for all $j$ and $\mathfrak{P}_{L}^{h}=\mathfrak{A}%
_{h}\cdot\rho.$

Now suppose that $h\leq\left(  1-p^{n}\right)  /2$ and let $j=p^{n}+h-1$. Then
$d_{h}\left(  j\right)  =1.$ Let
\[
i=p^{n}-1-j=p^{n}-1-\left(  p^{n}+h-1\right)  =-h.
\]
Then $i_{s}+j_{s}=p-1$ for all $s$. As above, $d_{h}\left(  i+j\right)  =1.$
But $i=-h<p^{n}-1-h$ so $d_{h}\left(  i\right)  =0.$ Thus $w_{h}\left(
j\right)  =w_{h}\left(  p^{n}+h-1\right)  =0$ and $\mathfrak{P}_{L}^{h}$ is
not free over $\mathfrak{A}_{h}.$

We summarize, generalizing to all $h\in\mathbb{Z}.$

\begin{theorem}
Let $H=H_{n,r,f}^{\ast},$ $0<r<n\leq2r,\;f\in K^{\times}.$ Suppose
$v_{L}\left(  x\right)  =-1$ and $v_{L}\left(  \rho\right)  =1.$ Let
$h\in\mathbb{Z},$ and let $m=\left\lfloor h/p^{n}\right\rfloor .$ Then
$\mathfrak{P}_{L}^{h}$ is free over $\mathfrak{A}_{h}$ if and only if
$\operatorname*{res}\left(  h-2\right)  >\left(  p^{n}-3\right)  /2;$ under
this restriction, $\mathfrak{P}_{L}^{h}=\mathfrak{A}_{h}\cdot\left(  T^{m}%
\rho\right)  .$
\end{theorem}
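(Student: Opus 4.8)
The plan is to reduce the statement to the $b=1$ analysis already carried out in the paragraphs preceding the theorem, using periodicity in $h$, and then to repackage the resulting inequality on residues. First I would invoke the observation, made at the start of Section 6, that $\mathfrak{P}_L^{h+p^n}=T\mathfrak{P}_L^{h}$ and $\mathfrak{A}_{h+p^n}=\mathfrak{A}_h$, so that the freeness question and the form of a free generator depend only on $\operatorname*{res}(h)$; writing $h=p^n m+\operatorname*{res}(h)$ with $m=\lfloor h/p^n\rfloor$, it suffices to settle the case $2-p^n\le h\le 1$ and then translate the generator by $T^m$. Since the hypothesis is $v_L(x)=-1$, i.e.\ $b=1$, the bound on $v_K(f)$ from the theorem statement in the $b=1$ case becomes $v_K(f)\ge 3$, which is exactly the hypothesis under which Proposition \ref{bigbce} applies; this is why the parenthetical remark about $v_K(f)\ge 3$ is recorded.

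Next I would carry over verbatim the computation of $d_h(j)$ for $b=1$, namely $d_h(j)=1$ if $j\ge p^n-1+h$ and $d_h(j)=0$ otherwise, and the two case analyses already displayed: (i) if $h>(1-p^n)/2$ then for every $j$ with $d_h(j)=1$ one shows $w_h(j)=1=d_h(j)$, because any $i$ with $i_s+j_s\le p-1$ for all $s$ and $d_h(i+j)=d_h(i)$ would force $d_h(i)=1$, hence $i>(p^n-1)/2$, hence $i+j\ge p^n$, contradicting the digitwise bound; so by part (3) of Proposition \ref{bigbce}, $\mathfrak{P}_L^{h}$ is free and $\mathfrak{P}_L^{h}=\mathfrak{A}_h\cdot\rho$ with $v_L(\rho)=1$; (ii) if $h\le(1-p^n)/2$, taking $j=p^n+h-1$ and $i=-h=p^n-1-j$ gives $i_s+j_s=p-1$ for all $s$, $d_h(i+j)=d_h(p^n-1)=1$ but $d_h(i)=0$, so $w_h(j)=0\ne 1=d_h(j)$ and $\mathfrak{P}_L^{h}$ is not free. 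The dividing line $h>(1-p^n)/2$ versus $h\le(1-p^n)/2$ is then rewritten as a condition on $\operatorname*{res}(h-2)$: since in the normalized range $h$ ranges over a complete residue system, $h>(1-p^n)/2$ is equivalent to $h-2>(1-p^n)/2-2=-(p^n+3)/2$, and $\operatorname*{res}(h-2)$ records $h-2$ in $[0,p^n-1]$; shifting by the appropriate multiple of $p^n$ turns the inequality into $\operatorname*{res}(h-2)>(p^n-3)/2$. Finally, for general $h\in\mathbb{Z}$ periodicity gives that $\mathfrak{P}_L^{h}$ is free over $\mathfrak{A}_h$ iff $\mathfrak{P}_L^{\operatorname*{res}(h)}$ is free over $\mathfrak{A}_{\operatorname*{res}(h)}$, and a free generator of the latter of valuation $1$ yields the free generator $T^m\rho$ of the former, since $\mathfrak{P}_L^{h}=T^m\mathfrak{P}_L^{\operatorname*{res}(h)}$ and $v_L(T^m\rho)=p^n m+1$.

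The only real bookkeeping obstacle is the arithmetic of pinning down exactly which residue class the threshold $(1-p^n)/2$ corresponds to after the shift by $2$ and the reduction mod $p^n$ — in particular checking that the boundary case $h=(1-p^n)/2$ (which occurs precisely when $p^n$ is odd, i.e.\ always, since $p$ is odd unless $p=2$, in which case $p^n-3$ is odd and the inequality is strict anyway) lands on the non-free side, so that the inequality in the statement is the correct strict one. I would handle this by simply tabulating: $h$ in the normalized range with $h=(1-p^n)/2$ gives $h-2=(1-p^n)/2-2=-(p^n+3)/2$, and adding $p^n$ gives $(p^n-3)/2\in[0,p^n-1]$, so $\operatorname*{res}(h-2)=(p^n-3)/2$, which fails the strict inequality $\operatorname*{res}(h-2)>(p^n-3)/2$ — consistent with case (ii). Everything else is a direct citation of Proposition \ref{bigbce} and the displayed computations, so no further difficulty is anticipated.
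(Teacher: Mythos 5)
Your proposal is correct and follows essentially the same route as the paper's proof: reduce to the normalized range $2-p^n\le h\le 1$ via periodicity, invoke the $b=1$ analysis of $d_h$ and $w_h$ from the paragraphs preceding the theorem together with Proposition \ref{bigbce}, and translate the threshold $h>(1-p^n)/2$ into $\operatorname*{res}(h-2)>(p^n-3)/2$, with the generator $T^m\rho$ coming from $\mathfrak{P}_L^h=T^m\mathfrak{P}_L^{\operatorname*{res}(h)}$. Your explicit check of the boundary case $h=(1-p^n)/2$ is a welcome extra verification but not a different argument.
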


\begin{remark}
Notice that we do not need $v_{K}\left(  f\right)  \geq2\left(  1-p^{-n}%
\right)  +p^{r+1-n}$ in the statement above since, for any $f\in K^{\times},$
an $H_{n,r,f}^{\ast}$ of suitably high tolerance exists.
\end{remark}

\begin{proof}
Consider first the case $2-p^{n}\leq h\leq1.$ Then, $0\leq h-2+p^{n}\leq
p^{n}-1.$ We have seen that $\mathfrak{P}_{L}^{h}$ is $\mathfrak{A}_{h}$-free
if and only if $h>\left(  1-p^{n}\right)  /2,$ and since $h\leq1$ this
inequality holds if and only if
\[
\frac{p^{n}-3}{2}<h-2+p^{n}\leq p^{n}-1.
\]
Thus, $\mathfrak{P}_{L}^{h}$ is $\mathfrak{A}_{h}$-free if and only if
$\operatorname*{res}\left(  h-2\right)  >\left(  p^{n}-3\right)  /2.$

Now for more general $h$, $\mathfrak{P}_{L}^{h}$ is free over $\mathfrak{A}%
_{h}$ if and only if $\mathfrak{P}_{L}^{\operatorname*{res}\left(  h\right)
}$ is free over $\mathfrak{A}_{\operatorname*{res}\left(  h\right)
}=\mathfrak{A}_{h}$, so we have freeness if and only if%
\[
\operatorname*{res}\left(  \operatorname*{res}\left(  h\right)  -2\right)
>\left(  p^{n}-3\right)  /2,
\]
and since the left-hand side reduces to $\operatorname*{res}\left(
h-2\right)  $ we get the inequality desired. That $\mathfrak{P}_{L}%
^{h}=\mathfrak{A}_{h}\cdot\left(  T^{m}\rho\right)  $ is immediate since
$\mathfrak{P}_{L}^{h}=T^{m}\mathfrak{P}_{L}^{\operatorname*{res}\left(
h\right)  }.$
\end{proof}

In particular, notice that $\mathfrak{O}_{L}$ is free over $\mathfrak{A}$ when $b=1.$

\section{Picking the Best Hopf Algebra and Action}

In the examples provided here -- with $L=K\left(  x\right)  ,\;v_{L}\left(
x\right)  =b,\;p\nmid b$ -- questions concerning the Hopf Galois module
structure of $\mathfrak{O}_{L}$ have little to do with the exact Hopf algebra
chosen. For any choice of $0<r<n\leq2r$ and $v_{K}\left(  f\right)
\geq2-p^{n}\left(  1-b\left(  p^{r+1}-1\right)  \right)  $ we have scaffolds
of sufficiently high tolerance, and their existence allows us to apply the
numerical data of Proposition \ref{bigbce}. So, if $\mathfrak{P}_{L}^{h}$ is
free over $\mathfrak{A}_{h}$ for $H=H_{n,r,f}^{\ast},$ then $\mathfrak{P}%
_{L}^{h}$ is free over $\mathfrak{A}_{h}$ for any $H=H_{n,r^{\prime}%
,f^{\prime}}^{\ast},\;0<r^{\prime}<n\leq2r$ and $v_{K}\left(  f^{\prime
}\right)  \geq2-p^{n}\left(  1-b\left(  p^{r+1}-1\right)  \right)  $.
Additionally, the description of $\mathfrak{A}_{h}$ given in Proposition
\ref{bigbce} is independent of which Hopf algebra $H$ is chosen since the
value of $T^{-w_{h}\left(  j\right)  }$ is independent of $H$; of course, the
actual elements $z_{p^{s}}$ depend on the chosen $H.$

In addition to the family constructed here, the divided power $K$-Hopf
algebra$\;A\;$of rank $p^{n}$ found in (\cite[Ex. 5.6.8]{Montgomery93}, where
it is denoted $H$) acts on $L$: in terms of its dual, $A^{\ast}$ represents
the $n^{\text{th}}$ Frobenius kernel of the additive group scheme, and its
simple coaction is given by Chase in \cite{Chase76}. In \cite[Sec.
5.2]{ByottChildsElder14} a scaffold of infinite tolerance (so the congruences
are replaced by equalities) is constructed for $A$. Their scaffold is similar
to our constructions -- indeed, for large values of $v_{K}\left(  f\right)  ,$
$A^{\ast}$ and $H_{n,r,f}$ act very similarly on $L,$ and we can view
$H_{n,r,f}$ as a deformation of $A^{\ast}.$

Thus, it is natural to ask: which Hopf algebra is ``best''? As the
determination of integral Hopf Galois module structure does not depend on the
choice of $H$, there would need to be further properties of interest to make a distinction.

For a single choice of $H_{n,r,f},$ different actions lead to scaffolds of
different tolerances, though we can always make $\mathfrak{T}$ arbitrarily
large. So here, we may ask: which action is the ``best''? If one is primarily
interested in describing $\mathfrak{O}_{L}$ as an $\mathfrak{A}$-module then
the action where $L=K\left(  x\right)  ,$ $v_{L}\left(  x\right)  =-1$ appears
to be a good choice since $\mathfrak{O}_{L}$ is free over $\mathfrak{A}$
whenever $v_{K}\left(  f\right)  \geq3.$ If, on the other hand, one is
primarily interested in describing $\mathfrak{P}_{L}^{h}$ for a specific value
of $h$ there may be better choices. For example, in an unpublished work by
Jelena Sundukova, she states that $\mathfrak{P}_{L}^{h}$ is a free
$\mathfrak{A}_{h}$-module if $v_{L}\left(  x\right)  =-h.$ Her work also
describes choices of $v_{L}\left(  x\right)  $ which make $\mathfrak{P}%
_{L}^{h}$ free over $\mathfrak{A}_{h}$ reasonably rare, for example
$v_{L}\left(  x\right)  =p^{n}-2.$ As with choosing the Hopf algebra, we would
need to have more properties of this action which we deem ``desirable'' in
order to pick one action over another.

\bibliographystyle{amsalpha}
\bibliography{MyRefs}
\end{document}